\newtheorem{theorem}{Theorem}
\newtheorem{lemma}[theorem]{Lemma}
\newtheorem{conjecture}{Conjecture}
\newtheorem{corollary}[theorem]{Corollary}
\newcommand{\Z}{{\mathbb Z}}
\newcommand{\Q}{{\mathbb Q}}
\newcommand{\R}{{\mathbb R}}
\newcommand{\C}{{\mathbb C}}
\newcommand{\F}{\mathbb{F}}
\newcommand{\proj}{\mathbb{P}}
\newcommand{\GEN}[1]{\langle #1 \rangle}
\newcommand{\inv}{^{-1}}
\newcommand{\matriz}[1]{\begin{array} #1 \end{array}}
\newcommand{\pmatriz}[1]{\left(\begin{array} #1 \end{array}\right)}
\newcommand{\Imagen}{\operatorname{Im}}
\newcommand{\diag}{\operatorname{Diag}}
\newcommand{\U}{{\mathcal U}}
\newenvironment{proofof}{\par\noindent{\bf Proof of }}{\qed\par\bigskip}
\renewcommand{\ker}{\operatorname{ker}}
\newcommand{\End}{\operatorname{End}}
\newcommand{\GL}{\operatorname{GL}}
\newcommand{\PSL}{\operatorname{PSL}}
\newcommand{\PSU}{\operatorname{PSU}}
\newcommand{\SL}{\operatorname{SL}}
   \dedicatory{Dedicated to Don Passman, our friend and mathematical model}
\begin{document}

\title[Bass units as factors in integral simple group rings]
{Bass units as free factors in integral group rings of simple groups}
\author{Jairo Z. Gon\c{c}alves}
\address{Department of Mathematics, University of S\~{a}o Paulo,
05508-090, Brazil}
\email{jz.goncalves@usp.br}
\author{Robert M. Guralnick}
\address{Department of Mathematics, University of Southern California,
Los Angeles, CA 90089-2532, USA}
\email{guralnic@usc.edu}
\author{\'{A}ngel del R\'{\i}o }
\address{Departamento de Matem\'{a}ticas, Universidad de Murcia, Murcia
30100, Spain }
\email{adelrio@um.es}
\keywords{}
\subjclass{}
\thanks{The second author was partially supported by the NSF
grant DMS-1001962  and the  Simons Foundation
Fellowship 224965. The third author was partially supported by MINECO (Ministerio de Econom\'{\i}a
y Competitividad) and FEDER (Fondo Europeo de Desarrollo Regional) under project MTM2012-35240 and Fundaci\'{o}n S\'{e}neca of Murcia under project 04555/GERM/06.}
\date{\today}

\begin{abstract}
Let $G$ be a finite group, $u$ a Bass unit based on an element $a$ of $G$ of prime order, and assume that $u$ has infinite order modulo the center of the units of the integral group ring $\Z G$. It was recently proved that if $G$ is solvable then there is a Bass unit or a bicyclic unit $v$ and a positive integer $n$ such that the group generated by $u^n$ and $v^n$ is a non-abelian free group. It has been conjectured that this holds for arbitrary groups $G$. To prove this conjecture it is enough to do it under the assumption that $G$ is simple and $a$ is a dihedral $p$-critical element in $G$. We first classify the simple groups with a dihedral $p$-critical element. They are all of the form $\PSL(2,q)$. We prove the conjecture for $p=5$; for $p>5$ and $q$ even; and for $p>5$ and $q+1=2p$. We also provide a sufficient condition for the conjecture to hold for $p>5$ and $q$ odd. With the help of computers we have verified the sufficient condition for all $q<10000$.
\end{abstract}
\maketitle

\section{Introduction}

Let $G$ be a finite group, let $\Z G$ denote the integral group ring of $G$ and let $\U(\Z G)$ denote the group of units of $\Z G$.
Sehgal \cite{Sehgal78} and Hartley and Pickel \cite{HP80} proved independently and almost simultaneously that $\U(\Z G)$ contains a non-abelian free group except in the cases where that is obviously impossible, namely when $G$ is abelian or $\U(\Z G)$ is non-abelian but it is finite (equivalently when $G$ is a Hamiltonian 2-group).
Both proofs were not constructive and this raised the question of giving concrete constructions of non-abelian free subgroups of $\U(\Z G)$.
This was obtained by Marciniak and Sehgal \cite{MS97}, except for Hamiltonian groups. The Hamiltonian case was settled by Ferraz \cite{Ferraz03}.
Many other constructions or techniques to produce non-abelian free groups in $\U(\Z G)$ have been obtained during the last years (see e.g. \cite{DJR07,GP06,GP10,GP11,GR08,JRR02,MS97,Passman08}).
On the other hand several authors have classified the finite groups $G$ for which $\U(\Z G)$ has a subgroup of finite index which is either non-abelian free  \cite{Jespers94}, or a direct product of free groups \cite{JLR96,LR97,JR00} or a direct product of free-by-free groups \cite{JPRRZ07}.

Let $g$ be an element of $G$ of order $n$ and let $k$ and $m$ be positive integers such that $k^m\equiv 1 \mod n$. Then
	\begin{equation}\label{BassUnit}
	 u_{k,m}(g)=(1+g+\dots + g^{k-1})^{m}+\frac{1-k^m}{n}(1+g+\dots+g^{n-1})
	\end{equation}
is a unit of $\Z G$. These units where introduced by Bass \cite{Bass66} and are usually called \emph{Bass units} or \emph{Bass cyclic units}. We also say that $u_{k,m}(g)$ is a Bass unit based on $g$.

If $h$ is another element of $G$ then
	$$1+(g-1)h(1+g+\dots+g^{n-1}) \quad \text{and} \quad 1+(1+g+\dots+g^{n-1})h(g-1)$$
are units in $\Z G$. These type of units where introduced by Ritter and Sehgal \cite{RS89} who called them \emph{bicyclic units}.

Bass units and bicyclic units are ubiquitous in the literature about integral group rings. Bass proved that the Bass units generate a subgroup of finite index in $\U(\Z G)$ if $G$ is cyclic \cite{Bass66}.
Bass and Milnor extended this result to abelian groups. Jespers, Parmenter and Sehgal \cite{JPS96} proved that if $G$ is nilpotent then the center of $\U(\Z G)$ contains a subgroup of finite index generated by some products of Bass units. This result was recently extended in \cite{JORV13}.
Ritter and Sehgal proved that the group generated by Bass units and the bicyclic units has finite index in $\U(\Z G)$ for many finite nilpotent groups \cite{RS89,RS91}. Latter Jespers and Leal extended this result to larger families of groups \cite{JL93}.
To construct concrete free groups Marciniak and Sehgal proved that if $u=1+(1-g)h\sum_{i=0}^{n-1} g^i$ is a non-trivial bicyclic unit then $u^*=1+(\sum_{i=0}^{n-1} g^{-i})h\inv(1-g\inv)$ (a bicyclic unit of the other type) then $\GEN{u,u^*}$ is free \cite{MS97}. In the Hamiltonian case Ferraz constructed free groups using Bass units \cite{Ferraz03}.

For the sake of brevity from now on by ``free group'' we always means ``non-abelian free group''.
In this paper we consider the problem of constructing free groups from a perspective that was initiated in \cite{GR11} and suggested by the result in \cite{MS97}. More precisely, let $B$ be the set formed by the Bass units and bicyclic units of $\Z G$ and fix $u\in B$. The question is to determine the necessary and sufficient conditions for the existence of some $v\in B$ such that $\GEN{u^n,v^n}$ is free for some integer $n$.
If this holds, we say that $v$ is a free companion of $u$.
Of course, a necessary condition for $u$ having a free companion is that $u$ is nontrivial. For bicyclic units this is also sufficient by the result of Marciniak and Sehgal mentioned above.
However, there are non-trivial Bass units of finite order and hence we should at least impose that $u$ is of infinite order.
Even more, if $G$ is a dihedral group and $u$ is a Bass unit of $\Z G$ then $u$ has finite order modulo the center of $\U(\Z G)$ and therefore $u$ cannot have a free companion. So we should require $u$ to be of infinite order modulo the center of $\U(\Z G)$.
It has been proved that in some cases this is the only requirement and the following conjecture was proposed:
	\begin{conjecture}\label{MainConjecture}\cite{GR11}. Let $u$ be a Bass cyclic unit of $G$ based on a non-central element of prime order. If $u$ has
	infinite order modulo the centre of $\U(\Z G)$ then $\Z G$ contains a Bass cyclic unit or a bicyclic unit $v$ such that
	$\GEN{u^n,v^n}$ is a free group for some integer $n$.
	\end{conjecture}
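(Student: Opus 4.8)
\medskip
\noindent\textbf{Overall strategy.} The plan is to exploit the reduction recalled above: by the solvable case together with that reduction, it suffices to prove Conjecture~\ref{MainConjecture} when $G$ is a finite simple group and $a$ is a dihedral $p$-critical element of $G$. So the first task is a classification problem --- \emph{determine the finite simple groups admitting a dihedral $p$-critical element} --- which I would attack by combining the classification of finite simple groups with information on how irreducible characters behave on elements of prime order. Being dihedral $p$-critical forces the normalizer of $\GEN{a}$ to be essentially a dihedral group and forces a faithful irreducible constituent that is real and essentially induced from $\GEN{a}$; a family-by-family case analysis (alternating, classical, exceptional, sporadic) should leave only $\PSL(2,q)$, with $p$ an odd prime dividing $q^{2}-1$ and $a$ generating (part of) the corresponding cyclic torus. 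This is the content of the first main theorem.

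\medskip
\noindent\textbf{Reduction to a matrix ping--pong problem.} Fix $G=\PSL(2,q)$ and a dihedral $p$-critical element $a$, lying in a cyclic maximal torus $T$ whose normalizer $N=\GEN{T,t}$ is dihedral with $t$ an involution inverting $T$. The Bass unit $u=u_{k,m}(a)$ lies in $\Z T\subseteq \Z N\subseteq \Z G$, and the hypothesis that $u$ has infinite order modulo the centre of $\U(\Z G)$ says, via the Wedderburn decomposition of $\Q G$, that the image of $u$ in one specific simple component $A$ (coming from a character induced from $T$) is a nontrivial diagonalizable matrix over a real number field $F$ built from $\Q(\zeta_{p})^{+}$. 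As companion I would take a bicyclic unit $v=1+(a-1)g\widehat{a}$ (or its Marciniak--Sehgal partner), $\widehat{a}=\sum_{i=0}^{p-1}a^{i}$, with $g\in G$ chosen so that, in a faithful representation $\rho\colon\Z G\to M_{n}(F)$ realizing $A$, the matrix $\rho(v)$ is in general position relative to $\rho(u)$. Since freeness of $\GEN{\rho(u^{n}),\rho(v^{n})}$ lifts to freeness of $\GEN{u^{n},v^{n}}$, the task becomes: for suitable $n$, verify Klein's table--tennis criterion for $\rho(u)^{n}$ and $\rho(v)^{n}$, i.e.\ that the attracting and repelling eigenspaces of $\rho(u)$ are disjoint from those of $\rho(v)$ at \emph{every} archimedean place of $F$; raising to a high power then produces the required ping--pong sets.

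\medskip
\noindent\textbf{The individual cases.} For $p=5$ the field is $\Q(\sqrt{5})$, of degree $2$ and unit rank $1$; the image group sits in a unit group of an order in $M_{2}(\Q(\sqrt{5}))$ very close to $\SL(2,\Z[\tfrac{1+\sqrt{5}}{2}])$, so the two relevant eigenvalues of $\rho(u)$ and $\rho(v)$ can be estimated by hand and the transversality checked directly. For $p>5$ with $q$ even we have $G=\PSL(2,q)=\SL(2,q)$ with trivial centre, so the pertinent component is split and the eigenvalue bookkeeping simplifies enough to confirm the criterion unconditionally. For $p>5$ with $q+1=2p$ the torus $T$ has order exactly $p$ and $N$ is dihedral of order $2p$, which pins down the induced representation and again makes the estimates feasible. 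In the remaining range ($p>5$, $q$ odd) I would isolate the exact arithmetic inequality --- a separation bound between the logarithmic eigenvalue data of $\rho(u)$ and $\rho(v)$, equivalently the non-vanishing of an associated determinant over the real places --- whose validity forces ping--pong, state it as the sufficient condition, and confirm it by computer for $q<10000$.

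\medskip
\noindent\textbf{Main obstacle.} The hard case is precisely $q$ odd with $p>5$. There $F=\Q(\zeta_{p})^{+}$ has degree $(p-1)/2$ and unit rank $(p-3)/2$, so $\rho(u)$ carries a whole Galois orbit of eigenvalues spread across many archimedean places, and one must choose a single companion $v$ whose fixed-point data avoids that of $u$ \emph{simultaneously} at all of them. Controlling every real embedding at once, uniformly in $q$, is what resists a clean argument, and this is why for odd $q$ and $p>5$ only a computationally verified sufficient condition is obtained rather than an unconditional proof.
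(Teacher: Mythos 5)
Your proposal reproduces the paper's overall plan --- reduce to a simple $G$ with $a$ dihedral $p$-critical, classify those groups as $\PSL(2,q)$, then treat $p=5$, $q$ even and $q+1=2p$ separately and fall back on a computer-verified sufficient condition for the remaining odd $q$ --- and, exactly like the paper, it does \emph{not} prove the Conjecture: the case $q$ odd, $p>5$ is left open in both. So this cannot be assessed as a proof of the statement; at best it is a sketch of the strategy, and within that sketch there are two concrete inaccuracies and one missing idea. First, the classification is not ``$p$ an odd prime dividing $q^{2}-1$'': if $p$ divides $q(q-1)$ then $a$ lies in a Borel subgroup $B$ with $B\neq D_B(a)$, so criticality forces $p\mid q+1$ together with the condition that the multiplicative order of $\ell$ modulo $p$ is exactly $2r$ (otherwise $a$ sits inside a proper subfield subgroup $\PSL(2,\ell^{s})$, giving a second maximal subgroup containing it). The paper's classification argument is by subgroup structure (unique maximal overgroup equals the dihedralizer), not by character theory.

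Second, the ping--pong engine is set up differently and your version has a genuine gap. The paper does not argue place-by-place in a Wedderburn component over $\Q(\zeta_p)^{+}$; it works in the complex permutation representation $\rho$ on $\proj^{1}(\F_q)$ and applies the Gon\c{c}alves--Passman criterion to the diagonalizable $S=u^{\rho}$ and the square-zero $\tau=(v-1)^{\rho}$. In that representation the attracting and repelling eigenspaces of $S$ each have dimension twice the number of $a$-orbits, while $\Imagen(\tau)$ has dimension $1$ or $2$, so the transversality hypotheses of the criterion can never hold as stated; declaring $\rho(v)$ to be ``in general position'' does not repair this dimension mismatch. The indispensable device --- absent from your proposal --- is to pass to $\overline{V}=V/W$ with $W=(V_{+}\cap\ker\tau)+(V_{-}\cap\ker\tau)$, which cuts the extremal eigenspaces down to dimension one, and to choose the bicyclic unit so that the rank of $\tau$ is controlled: the paper uses $v=1+(1-\sigma)g\widehat{\sigma}$ (rank one, $q$ even) or $v_h=1+(1-g)h\widehat{g}$ with $g$ of order $\frac{q+1}{2}$ (rank two, $q$ odd), not a unit based on $a$ itself. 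Accordingly, the sufficient condition for odd $q$ that is checked by computer is a combinatorial inequality about intersections of $a$- and $g$-orbits under $h$ and $g^{h}$, not an archimedean eigenvalue-separation bound.
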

It is worth mentioning that there are examples of groups where $u$ satisfies the hypothesis of the Conjecture and there is no Bass unit $v$ satisfying the thesis and other examples where there are no bicyclic units $v$ satisfying the thesis \cite{GR11}. That is why one needs to consider both Bass units and bicyclic units.
The Conjecture has been proved in \cite{GR11} under the assumption that $G$ is solvable. In fact, the arguments of \cite{GR11} indicates a plan to prove the Conjecture in general. To explain this we need to introduce some terminology.

If $g \in G$, set $D_G(g)=\{x \in G : xgx^{-1}  \in \{  g,g^{-1} \}  \}$.
We call this the \emph{dihedralizer} of $g$ in $G$.
Now let  $p$ be a prime. We say that an element $g \in G$ is \emph{dihedral $p$-critical} in $G$ if $g$ has order $p$ and it satisfies:
\begin{enumerate}
\item $D_G(g) \ne G$;
\item $H=D_H(g)$ for every proper subgroup of $H$ of $G$ containing $g$; and
\item $G/N=D_{G/N}(gN)$ for every nontrivial normal subgroup of $G$.
\end{enumerate}
An easy induction argument shows that in order to prove the Conjecture in a class of groups which is closed under subgroups and epimorphic images it is enough to prove it under the assumption that $u$ is based in a dihedral $p$-critical element of $G$ with $p>3$. In \cite{GR11} the dihedral $p$-critical elements with $p>3$ in non-simple finite groups were classified. Then the Conjecture was proved in all the cases of this classification. This proved the Conjecture for arbitrary solvable groups. In this paper we follow the same plan with the aim to prove the Conjecture for arbitrary groups. After the work in \cite{GR11} it remains to prove the Conjecture under the assumption that $G$ is simple and $u$ is based in a dihedral $p$-critical element of $G$ with $p>3$. For that we start classifying in Section~\ref{SectionClasificationDpC} the dihedral $p$-critical elements in simple groups with $p>3$.
More specifically we will prove:
\begin{theorem} \label{dpcsimple}  Let $G$ be a finite simple group and $g$ an element of $G$ of order $p$ with $p>3$ and prime. Then $g$ is dihedral $p$-critical in $G$ if and only if $G \cong \PSL(2,l^r)$  with $l$ prime and either $p=l^r=5$ or $l\ne p>5$ and $2r$ is the multiplicative order of $l$ modulo $p$.
\end{theorem}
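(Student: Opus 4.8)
The plan is to analyze the three defining conditions of a dihedral $p$-critical element directly, using the structure theory of finite simple groups. The first observation is that condition (3) is automatic when $G$ is simple (there are no nontrivial proper normal subgroups), so we must characterize simple $G$ containing an element $g$ of order $p>3$ with (1) $D_G(g)\ne G$ and (2) every proper subgroup $H$ containing $g$ satisfies $H=D_H(g)$, i.e. $g$ is central or inverted in every proper subgroup containing it. Condition (2) is extremely restrictive: it says that $\GEN{g}$ is ``almost normal'' in every proper overgroup, so in particular $N_G(\GEN{g})\ne G$ forces $N_G(\GEN{g})$ to be the unique maximal subgroup containing $g$ (any proper overgroup of $g$ normalizes $\GEN{g}$, hence lies in $N_G(\GEN{g})$). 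Thus $g$ lies in a \emph{unique} maximal subgroup $M=N_G(\GEN{g})$, and moreover $M$ is the full normalizer of a cyclic $p$-subgroup. I would first extract these consequences and then run through the classification.

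First I would dispose of the alternating and sporadic groups and the exceptional groups of Lie type: in all of these one can exhibit (for a suitable prime $p>3$ dividing $|G|$, and in fact for every such $p$) an element of order $p$ lying in at least two maximal subgroups, or lying in a proper subgroup in which it is neither central nor inverted — for instance a subgroup $A_5$, $\PSL(2,7)$, or a larger-rank subsystem subgroup. This part is a finite check modulo the subgroup structure available in the literature (ATLAS, Kleidman--Liebeck, etc.). The bulk of the work is the classical groups. For $\PSL(n,q)$, $\PSp$, $\PSU$, $\Omega^\pm$ of rank $\ge 2$, a semisimple element of order $p$ whose eigenvalue structure is not ``minimal'' will centralize a torus of rank $\ge 2$ or sit inside a Levi subgroup in which it is not inverted, violating (2); and if the element is ``minimal'' (eigenvalues $\zeta,\zeta^{-1}$ on a $2$-space, $1$ elsewhere, say) one typically finds it in two distinct maximal subgroups (a reducible one stabilizing the $2$-space and another one). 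The analysis of $\PSL(2,q)$ itself is where the positive examples arise: here the cyclic subgroups of order $p$ are contained in a unique maximal subgroup — a dihedral group of order $2(q\mp1)$ when $p\mid q\mp 1$, or a Borel when $p\mid q$ — and $\GEN{g}$ is indeed normal there, so (2) holds precisely when the relevant maximal subgroup is dihedral, i.e. when $p$ is odd and divides $q-1$ or $q+1$ but $p\ne$ the characteristic, or when $p$ equals the characteristic and the element has order $p$ inside the (abelian, hence ``dihedralizer = whole group'' after checking the Borel is not self-dihedralized — forcing $p=q=5$). Translating ``$p\mid q\mp 1$ and $p\ne l$'' into multiplicative-order language gives exactly the condition that the multiplicative order of $l$ modulo $p$ is $2r$ (order $r$ would put $p\mid q-1$ but then the torus of order $q-1$ is not inverted unless $q$ is small — one checks $\PSL(2,l^r)$ with $p\mid l^r-1$ still works because the split torus normalizer is dihedral; careful: one needs $2r$ exactly so that $p\nmid l^r-1$, forcing $p\mid l^r+1$, which is the genuinely new case, together with the small case $p=5=q$). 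I would organize this as: Lemma reducing to $M=N_G(\GEN{g})$ being the unique maximal overgroup and checking condition (1); then a case analysis $G=\PSL(2,q)$ giving the ``if'' direction with explicit dihedral normalizers; then the ``only if'' direction eliminating all other simple $G$.

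The main obstacle will be the ``only if'' direction for the classical groups of small rank and small field size, where the generic eigenvalue/torus argument fails and one must appeal to explicit maximal subgroup lists — e.g. distinguishing $\PSL(2,q)$ from $\PSL(3,q)$, $\PSp(4,q)$, $\PSU(3,q)$, and handling the exceptional isomorphisms among low-rank groups. A secondary subtlety is verifying condition (1), $D_G(g)\ne G$, which fails exactly when $g$ is real-with-small-centralizer in a way that makes $D_G(g)=G$; for $\PSL(2,q)$ one checks $|D_G(g)|=2|N_G(\GEN{g})|/|C|$ type bounds and confirms it is proper as long as $q>5$ or $p<q$, the boundary case $\PSL(2,5)\cong A_5$ with $p=5$ requiring its own verification that $D_G(g)$ is a proper (dihedral of order $10$) subgroup. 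Throughout I would lean on the fact, already used in \cite{GR11}, that for the purposes of this classification one only needs $p>3$, which rules out the sporadic small-prime coincidences that would otherwise complicate the argument.
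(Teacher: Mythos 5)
Your high-level skeleton agrees with the paper's: condition (3) is vacuous, condition (2) forces $D_G(g)=N_G(\GEN{g})$ to be the unique maximal subgroup containing $g$, and the positive examples are confined to $\PSL(2,q)$. But the decisive part of the argument --- the analysis of $\PSL(2,q)$ itself --- contains concrete errors that, followed literally, give the wrong classification. (i) Your treatment of $p\mid q-1$ is self-contradictory: you first say this case ``still works because the split torus normalizer is dihedral'' and then that one must have $p\nmid q-1$. The reason it fails has nothing to do with the torus normalizer: an element of order $p\mid q-1$ lies in a Borel subgroup $B$, a proper overgroup with $B\ne D_B(g)$ (the unipotent radical does not dihedralize $g$); equivalently $g$ lies in the two distinct maximal subgroups $B$ and $N_G(T)$, so your premise that such $g$ has a unique maximal overgroup is false. (ii) Your justification for demanding that the order of $\ell$ mod $p$ be exactly $2r$ (``so that $p\nmid \ell^r-1$, forcing $p\mid \ell^r+1$'') is incorrect: $p\mid q+1$ is equivalent to the order being $2s$ with $r/s$ odd, which is strictly weaker than $s=r$. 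The genuine obstruction when $s<r$ is the subfield subgroup $\PSL(2,\ell^s)$, a proper \emph{simple} overgroup of (a conjugate of) $g$, hence never its own dihedralizer; you never mention subfield subgroups, and your criterion as justified would wrongly admit, e.g., $\PSL(2,3^9)$ with $p=7$. (iii) You omit the exclusion of $p=5$ for $q>5$: since $q\equiv\pm1\pmod 5$, the $2$-dimensional representation of $\SL(2,5)$ is realizable over $\F_q$, so $A_5\cong\PSL(2,5)$ embeds properly in $\PSL(2,q)$ and kills condition (2); without this your condition admits $\PSL(2,9)$ with $p=5$, which the theorem excludes.

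The negative direction for the remaining simple groups is also only a plan in your write-up. ``A finite check modulo the literature'' does not handle the infinite family $A_p$ ($p>5$), where the needed argument is that $g$ is $A_p$-conjugate to $g^e$ for some square $e\not\equiv\pm1\pmod p$, contradicting $N_G(\GEN{g})=D_G(g)$; and ``one typically finds it in two distinct maximal subgroups'' is not an argument for the classical groups. The paper's mechanism is sharper and worth adopting: rule out parabolics, conclude $g$ is regular semisimple with $D_G(g)=N_G(T)$, and then either observe $[N_G(T):T]>2$ (exceptional groups, via the Liebeck--Saxl--Seitz tables) or note that the stabilizer of an irreducible $\GEN{g}$-submodule contains a nonsolvable classical group of smaller dimension while $N_G(\GEN{g})$ is solvable by Schur's lemma.
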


Then we need to prove the Conjecture for all the groups $G=\PSL(2,q=l^r)$ appearing in Theorem~\ref{dpcsimple}. The case when $q=5$ (equivalently, $G=A_5$) follows from \cite[Theorem~7.3]{GR08}. In Section~\ref{SectionEven} we prove  the following theorem which confirms the conjecture for the case when $q$ is even.
\begin{theorem}\label{Even}
Let $G=\PSL(2,q)$ with $q$ an even prime power. Let $a$ be a dihedral $p$-critical element of $G$ and $u=u_{k,m}(a)$ a Bass cyclic unit of infinite order. Then there is a bicyclic unit $v$ of $\Z G$ and a positive integer $n$ such that $\GEN{u^n,v^n}$ is free non-abelian.
\end{theorem}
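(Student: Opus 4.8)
The plan is to realise $u$ and a suitable bicyclic unit $v$ inside one complex representation $\rho$ of $G$, show that $\rho(u)^n$ and $\rho(v)^n$ freely generate a rank-two free group for large $n$ by a ping--pong argument, and transfer freeness back to $\U(\Z G)$.

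\textbf{Step 1 (locate $a$).} Since $q$ is even, Theorem~\ref{dpcsimple} forces $p>5$, $\gcd(p,2)=1$ and the multiplicative order of $2$ modulo $p$ to equal $2r$; equivalently $p\mid q+1$. Hence $\langle a\rangle$ is the unique subgroup of order $p$ of a cyclic maximal torus $T\leq G$ of order $q+1$, with centraliser $T$, and --- using that $q$ even makes $\PSL(2,q)=\operatorname{PGL}(2,q)=\SL(2,q)$ and $q+1$ odd --- $N_G(\langle a\rangle)=N_G(T)$ is dihedral of order $2(q+1)$. In the natural action on $\proj^1(\F_q)$ the torus $T$ acts regularly, so $\langle a\rangle$ acts freely with $(q+1)/p$ orbits.

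\textbf{Step 2 (dynamics of $u$).} Let $M=\C^{\proj^1(\F_q)}$ be the permutation $\C G$-module and $\rho_0$ the representation it affords. As $u=u_{k,m}(a)\in\Z\langle a\rangle$ and $\langle a\rangle$ acts freely on $\proj^1(\F_q)$, the restriction of $M$ to $\langle a\rangle$ is a sum of $(q+1)/p$ copies of the regular representation; hence $\rho_0(u)$ is diagonalisable with eigenvalue $1$ and eigenvalues the cyclotomic units $\eta_j=\bigl((\zeta^{jk}-1)/(\zeta^j-1)\bigr)^m$ ($1\leq j\leq p-1$), where $\zeta$ is a fixed primitive $p$-th root of unity, each with multiplicity $(q+1)/p$. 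Because $u$ has infinite order, $\eta_1$ is not a root of unity; combined with $\prod_j\eta_j=1$ and $|\eta_j|=|\eta_{p-j}|$ this shows that the eigenspaces of $\rho_0(u)$ of maximal modulus span a proper invariant subspace $M^+$, those of minimal modulus a proper invariant subspace $M^-$, that $\dim M^+=\dim M^-=2(q+1)/p$, and that both of these moduli differ from $1$. Passing to $\rho=\Lambda^{2(q+1)/p}\rho_0$ (Tits' device) gives a representation of $G$ in which $\rho(u)$ is \emph{biproximal}: it has a unique eigenvalue of strictly largest modulus, with one-dimensional attracting subspace $P^+$ and repelling hyperplane, and the same holds for $\rho(u)^{-1}$ with data $P^-$.

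\textbf{Step 3 (the bicyclic unit and ping--pong).} Put $v=1+\widehat a\,h\,(a-1)$ with $\widehat a=1+a+\dots+a^{p-1}$ and $h\in G\setminus N_G(T)$; by Step~1 this makes $v$ a nontrivial bicyclic unit, and since $(a-1)\widehat a=0$ in $\Z G$ we have $(v-1)^2=0$, so $\rho(v)=I+N$ is unipotent. The heart of the argument is to choose $h$ so that $\rho(v)$ is in general position relative to the eigenstructure of $\rho(u)$: one needs $P^+$ and $P^-$ to avoid the fixed subspace of $\rho(v)$ and their images $NP^+,NP^-$ to avoid the repelling hyperplanes of $\rho(u)^{\pm1}$, together with finitely many further non-vanishing conditions of the same type. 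Each such condition fails only for $h$ in a proper subset of $G$, and --- here the subgroup structure of $\PSL(2,q)$ with $q$ even is used --- these bad sets do not exhaust $G$, so a good $h$ exists. For such $h$ a Tits-style ping--pong on the projective space of $\rho$ (one table a small neighbourhood of $\{P^+,P^-\}$, the other a spiky neighbourhood of the $\rho(v)$-orbits of $P^+$ and $P^-$) yields that $\rho(u)^n$ and $\rho(v)^n$ freely generate a free group of rank $2$ for all $n\gg0$. The canonical surjection of the rank-two free group onto $\GEN{u^n,v^n}$ composed with $\rho$ is then an isomorphism onto a free group of rank $2$, hence injective, so $\GEN{u^n,v^n}$ is itself free non-abelian, as asserted.

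\textbf{Expected main obstacle.} The delicate point is Step~3: exhibiting a \emph{single} bicyclic unit $v$ --- equivalently a single conjugating element $h\in G\setminus N_G(T)$ --- whose (necessarily unipotent) image is simultaneously in general position with respect to all the attracting and repelling data of $\rho(u)^{\pm1}$, and verifying this uniformly for every even $q$ from the structure of $G$. By comparison, the reduction to biproximality in Step~2 and the construction of ping--pong tables around the merely polynomially attracting unipotent $\rho(v)$ are routine.
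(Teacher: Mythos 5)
Your overall architecture --- realise $u$ and a bicyclic unit in the permutation representation on $\proj^1(\F_q)$, make the image of $u$ proximal, and play ping--pong against the unipotent image of $v$ --- is in the same spirit as the paper, but the devices differ: instead of Tits' exterior-power trick the paper passes to the quotient $V/W$ with $W=(V_+\cap\ker\tau)+(V_-\cap\ker\tau)$, which cuts the top and bottom eigenspaces down to lines while keeping $\rho(v)-I$ square-zero, so that the Gon\c{c}alves--Passman free-product criterion (Theorem~\ref{STau}) applies verbatim. Two repairs your Step~2 would need even on its own terms: to know that the maximal modulus is attained by exactly one conjugate pair $\{\eta_j,\eta_{p-j}\}$ you must first replace $u$ by a power so that $p\mid m$ (this is how the paper gets pairwise distinct moduli); and after applying $\Lambda^{2(q+1)/p}$ the element $\rho(v)-I$ no longer squares to zero, so the square-zero ping--pong you implicitly lean on must be replaced by a genuine Tits argument driven by the top nonvanishing power of $N$, which in turn multiplies the transversality conditions you must check.

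The genuine gap is Step~3, and you have named it but not filled it. You assert that each general-position condition fails only on a proper subset of $G$ and that ``these bad sets do not exhaust $G$.'' For a finite group this is not an argument: a union of finitely many proper subsets can perfectly well cover $G$, and there is no Zariski density to invoke. This is precisely where the paper does its real work: Lemma~\ref{ImkerEven} shows that for \emph{every} $h$ the extreme eigenspaces are not contained in $\ker\tau$ and $\Imagen(\tau)$ avoids the relevant orthogonal complements, and then a specific $h$ is constructed using triple transitivity of $G$ on $\proj^1(\F_q)$ (prescribing the images of $x_0$, $a(x_0)$, $a^2(x_0)$) so that the last condition reduces to a cyclotomic-integer identity that visibly fails because one side equals $q-1$ and the other has absolute value at most $2$. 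That the existence of a good $h$ is not automatic is underscored by the odd-$q$ case, where the authors could not establish it in general and had to leave a computer-verified sufficient condition. Until you exhibit a concrete $h$ (or a counting argument showing the bad sets cannot cover $G$) and verify the transversality for it, the proof is incomplete at its central step. A secondary point: your choice $v=1+\widehat a\,h\,(a-1)$ makes $\rho_0(v)-I$ of rank about $(q+1)/p$, which inflates the bookkeeping; the paper instead takes a bicyclic unit built from the split-torus element $\sigma$ of order $q-1$, for which the rank is exactly $1$ when $q$ is even.
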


Unfortunately we have not been able to prove the Conjecture for $q$ odd and greater than 5. Alternatively we propose a family of bicyclic units $v_h$ as candidates for free companion of $u$. We also obtain a sufficient condition for $v_h$ to be a free companion of $u$. More specifically we prove the following:
\begin{theorem}\label{Odd}
 Let $G=\PSL(2,q)$ with $q$ an odd prime power. Let $a$ be a dihedral $p$-critical element of $G$ and $u=u_{k,m}(a)$ a Bass cyclic unit of infinite order.
Then $a$ is contained in a cyclic subgroup $\GEN{g}$ of $G$ of order $\frac{q+1}{2}$. Let $O_0$ and $O_1$ denote the $g$-orbits under the natural action of $G$ on the projective line over the field with $q$ elements. Let $O_{i1},\dots,O_{id}$ be the $a$-orbits contained in $O_i$, for $i=1,2$.
Let $h\in G\setminus D_G(a)$ and consider the bicyclic unit $v=1+(1-g^h)h(1+g^h+\dots+(g^h)^{\frac{q+1}{2}})$.
If
	  \begin{equation}\label{SumbConclusion}
    \sum_{j=1}^{d} \lvert h(O_{0j})\cap O_0 \rvert \; \lvert O_{0j}\cap g^h(O_1) \rvert \ne \sum_{j=1}^{d} \lvert h(O_{1j})\cap O_0 \rvert \; \lvert O_{1j}\cap g^h(O_0) \rvert.
    \end{equation}
then there is a positive integer $n$ such that $\GEN{u^n,v^n}$ is free.
\end{theorem}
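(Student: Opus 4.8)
The plan is to work throughout inside the permutation representation of $G=\PSL(2,q)$ on the projective line. Put $X=\mathbb{P}^1(\F_q)$, let $V=\C X$ be the permutation module with basis $\{\delta_x\}_{x\in X}$, and let $\rho\colon\Z G\to\End_\C(V)$ be the associated representation. It suffices to produce $n$ with $\GEN{\rho(u)^n,\rho(v)^n}$ free of rank $2$: the natural epimorphism of the free group $F(x,y)$ onto $\GEN{u^n,v^n}$, followed by $\rho$, is then an epimorphism onto a free group of rank $2$, hence an isomorphism, forcing $\GEN{u^n,v^n}$ to be free. Since the non-split torus $\GEN g$ acts freely on $X$, the $a$-orbits $O_{ij}$ ($i\in\{0,1\}$, $1\le j\le d$, $d=\tfrac{q+1}{2p}$) are regular $\GEN a$-sets, so $V$ is, as an $\GEN a$-module, the sum of $2d$ copies of the regular representation. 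Consequently $\rho(u)=u_{k,m}(\rho(a))$ is diagonalizable and acts by the scalar $u_{k,m}(\omega)$ on the $\omega$-isotypic component for each $p$-th root of unity $\omega$; this scalar is $1$ for $\omega=1$ and equals the cyclotomic unit $\left(\tfrac{\omega^{k}-1}{\omega-1}\right)^{m}$ otherwise. Because $\rho(u)$ fixes the all-ones vector, $u$ has infinite order modulo the centre of $\U(\Z G)$ exactly when some $u_{k,m}(\omega)$, $\omega\neq1$, is not a root of unity; since the set $\{\tfrac{\omega^{k}-1}{\omega-1}:\omega^{p}=1\neq\omega\}$ is stable under $\mathrm{Gal}(\Q(\omega)/\Q)$, being a root of unity or not is the same for all its members, so under our hypothesis none of them is. Write $V=V_{1}\oplus V_{*}$, where $V_{1}$ is the $a$-fixed subspace (eigenvalue $1$) and $V_{*}$ is the sum of the nontrivial $\GEN a$-isotypic components; on $V_{*}$ every eigenvalue of $\rho(u)$ is a nontrivial cyclotomic unit, and since these are global units, some eigenvalue $\lambda$ has an archimedean conjugate of absolute value $>1$ — choosing the embedding suitably, $|\lambda|>1$.

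Next I would compute $\rho(v)-1$ explicitly. Writing $\widehat{\GEN{g^h}}$ for the sum of the powers of $g^h$, so that $v-1=(1-g^h)h\,\widehat{\GEN{g^h}}$ is a bicyclic unit as in the introduction, one uses $h\,\widehat{\GEN{g^h}}=\widehat{\GEN g}\,h$ together with $\rho(\widehat{\GEN g})\delta_x=\mathbf 1_{O_{r(x)}}$, where $r(x)$ is the index of the $g$-orbit of $x$, to obtain that $\rho(v)-1$ has rank at most one:
\[
\rho(v)-1=w\otimes\varphi,\qquad w=\mathbf 1_{O_0}-\mathbf 1_{g^h(O_0)},\qquad \varphi=\mathbf 1_{h^{-1}(O_0)}^{*}-\mathbf 1_{h^{-1}(O_1)}^{*},
\]
where $\mathbf 1_{S}\in V$ is the indicator of $S\subseteq X$ and $\mathbf 1_{S}^{*}$ the functional summing the $S$-coordinates. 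Since $g$ stabilizes $O_0$ one checks $\varphi(w)=0$; hence $\rho(v)$ is unipotent with $(\rho(v)-1)^2=0$ and $\rho(v)^{\pm n}=1\pm n(w\otimes\varphi)$.

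The third step is to apply the free-group criterion underlying \cite{GR11} (the machinery developed there for the solvable case, in the spirit of \cite{MS97,GR08}): if $\rho(u)$ is diagonalizable with an eigenvalue of infinite order and $\rho(v)$ is a nontrivial unipotent unit stabilizing neither $V_{1}$ nor $V_{*}$, then $\GEN{\rho(u)^n,\rho(v)^n}$ is free for some $n$. Because $\rho(v)-1=w\otimes\varphi$ has rank one with $\varphi(w)=0$, the single inequality $\varphi(P_{V_{*}}w)\neq0$ (with $P_{V_{*}}$ the rational spectral projection onto $V_{*}$) already implies $w\notin V_{1}$, $w\notin V_{*}$, $\varphi|_{V_{1}}\neq0$ and $\varphi|_{V_{*}}\neq0$, i.e.\ all the required non-stabilization. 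It remains to identify $\varphi(P_{V_{*}}w)\neq0$ with \eqref{SumbConclusion}. Expanding $w$ and $\varphi$ in the isotypic eigenbasis $e^{\omega}_{ij}=\sum_{t\in\Z/p}\omega^{-t}\delta_{a^{t}x_{ij}}$ (and its dual), the coefficients that occur are Gauss sums $\sum_{t\in S}\omega^{\pm t}$ over subsets $S\subseteq\Z/p$ encoding the sets $h(O_{ij})\cap O_0$ and $O_{ij}\cap g^h(O_0)$; summing over $\omega\neq1$ and using $\sum_{\omega\neq1}\omega^{s}=p[s\equiv0]-1$ collapses each Gauss sum to a cardinality and yields, up to the nonzero factor $-2/p$,
\[
\varphi(P_{V_{*}}w)=-\tfrac{2}{p}\Bigl(p\,|h(O_0)\cap O_0|-\sum_{i,j}|h(O_{ij})\cap O_0|\,|O_{ij}\cap g^h(O_0)|\Bigr),
\]
the first term arising because $g(O_0)=O_0$ forces $|g^h(O_0)\cap h^{-1}(O_0)|=|h(O_0)\cap O_0|$, so the diagonal Gauss terms recombine into $p\,|h(O_0)\cap O_0|$. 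Finally, using $|O_{0j}\cap g^h(O_1)|=p-|O_{0j}\cap g^h(O_0)|$ and summing over $j$ shows that the bracketed quantity is exactly the difference of the two sides of \eqref{SumbConclusion}; hence \eqref{SumbConclusion} is equivalent to $\varphi(P_{V_{*}}w)\neq0$, and the theorem follows.

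The step I expect to be the main obstacle is the free-group criterion invoked above. Unlike in the classical two-matrix situation, $\rho(u)$ has no simple eigenvalue of maximal modulus: the Galois conjugates of $\lambda$ lying on the same circle, together with the multiplicity $2d$ coming from the $a$-orbits, mean that $\rho(u)^{\pm n}$ is proximal only towards a subspace, while $\rho(v)^n$ grows merely linearly. One must therefore run the ping-pong on the flag attached to the moduli of the eigenvalues of $\rho(u)$ and verify that the transvection datum $(w,\varphi)$ is in general position with respect to it; granting (or adapting from \cite{GR11}) this higher-dimensional ping-pong, the only genuinely new input for $\PSL(2,q)$ is the combinatorial identity for $\varphi(P_{V_{*}}w)$ computed above.
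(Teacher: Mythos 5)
Your rank-one computation for $\rho(v)-1=w\otimes\varphi$ with $\varphi(w)=0$, and the cardinality collapse converting a spectral non-degeneracy condition into inequality (\ref{SumbConclusion}), are both sound and closely parallel what the paper actually does (its Lemma~\ref{SufCond} is essentially your Gauss-sum computation). The genuine gap is exactly where you flag it, and it cannot be ``granted'': the free-group criterion you invoke is not correct as stated, and the decomposition $V=V_1\oplus V_*$ into the $a$-fixed space and its complement is not the one on which any ping-pong can be run. The criterion the paper uses (Theorem~\ref{STau}, from \cite{GP06}) requires decomposing $V$ by the \emph{moduli} of the eigenvalues of $\rho(u)$, with a unique eigenspace $V_+$ of maximal modulus and a unique eigenspace $V_-$ of minimal modulus; to get uniqueness one must first replace $u$ by a power so that $p\mid m$ and invoke \cite[Lemma~3.5]{GP06}, which gives that the values $\lvert u_{k,m}(\zeta^b)\rvert$ are pairwise distinct for distinct $\pm b$. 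Even then the criterion does not apply directly, because $\dim V_{\pm}=4d>1=\operatorname{rank}\tau$, so $V_-\cap\ker\tau\neq0$ automatically. The essential move missing from your proposal is to pass to the quotient $\overline V=V/W$ with $W=(V_+\cap\ker\tau)+(V_-\cap\ker\tau)$, where the images of $V_\pm$ become one-dimensional, and to verify the four intersection conditions of Theorem~\ref{STau} there.

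A second, related problem is that your non-degeneracy condition $\varphi(P_{V_*}w)\neq0$ is a single sum over \emph{all} nontrivial isotypic components, so a priori it only yields $\varphi(\pi_b w)\neq0$ for \emph{some} $b$, whereas the criterion needs control of the two specific indices $b_+,b_-$ realizing the extremal moduli (and these depend on $k$ and $m$). The paper bridges this with Lemma~\ref{PsihbinK}: because the coefficients involved are integers and the minimal polynomial of $\zeta$ over $\Q$ is $1+X+\cdots+X^{p-1}$, the conditions $\pi^h_b(\Psi^{(h)})\in K$ for the various $b\neq0$ are all equivalent to one another, hence equivalent to their aggregate. Without that equivalence your single summed inequality does not reach the extremal eigenspaces, so the argument does not close even if one supplies a correct higher-dimensional ping-pong lemma.
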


As a consequence of Theorem~\ref{Odd} we obtain
\begin{corollary}\label{OddCor1}
Let $G=\PSL(2,q)$ with $q$ an odd prime power. Let $a$ be a dihedral $p$-critical element of $G$ and $u=u_{k,m}(a)$ a Bass cyclic unit of infinite order.
Let $g$, $O_0$, $O_1$ and $O_{ij}$ for $i=1,2$, $j=1,\dots,d$ as in Theorem~\ref{Odd}.
If there is $h\in G\setminus D_G(a)$ satisfying
	\begin{equation}\label{SumbConclusion}
    \sum_{j=1}^{d} \lvert h(O_{0j})\cap O_0 \rvert \; \lvert O_{0j}\cap g^h(O_1) \rvert \ne \sum_{j=1}^{d} \lvert h(O_{1j})\cap O_0 \rvert \; \lvert O_{1j}\cap g^h(O_0) \rvert.
    \end{equation}
then there is a bicyclic unit $v$ of $\Z G$ such that $\GEN{u^n,v^n}$ is free for some integer $n$.
\end{corollary}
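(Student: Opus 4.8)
The plan is to read the corollary off directly from Theorem~\ref{Odd}. Suppose $h\in G\setminus D_G(a)$ satisfies \eqref{SumbConclusion}, with $g$, $O_0$, $O_1$ and the $O_{ij}$ exactly as fixed in Theorem~\ref{Odd}. Let $v$ be the bicyclic unit of $\Z G$ attached to this $h$ as in the statement of Theorem~\ref{Odd}, namely the Ritter--Sehgal bicyclic unit built from the conjugate $g^h$ of $g$ (of order $\frac{q+1}{2}$) and from the group element $h$. Since the inequality assumed in the corollary is literally the inequality occurring in Theorem~\ref{Odd} for this same $h$ and the same data $g,O_0,O_1,O_{ij}$, that theorem applies and furnishes a positive integer $n$ with $\GEN{u^n,v^n}$ free. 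This exhibits a bicyclic unit $v$ with the required property, which proves the corollary.

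There is essentially no obstacle here: the corollary is a purely existential repackaging of Theorem~\ref{Odd}, quantifying the conclusion over the choice of $h$. The one point worth flagging is that the element $h$ produced by the hypothesis must play two roles at once -- it both defines the bicyclic unit $v$ and enters the orbit sums of \eqref{SumbConclusion} -- but in Theorem~\ref{Odd} both roles are already carried out by a single $h$, and the cyclic subgroup $\GEN{g}$ of order $\frac{q+1}{2}$ through $a$, together with the decompositions of $\proj^1(\F_q)$ into $g$-orbits $O_0,O_1$ and into $a$-orbits $O_{ij}$, were fixed once and for all in that theorem. Hence nothing further needs to be reconciled: one simply feeds the given $h$ into Theorem~\ref{Odd} and reads off the conclusion. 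All the substance -- constructing a suitable representation of $\Z G$ and running the ping-pong / attracting-fixed-point argument that converts the numerical inequality \eqref{SumbConclusion} into non-abelian freeness of $\GEN{u^n,v^n}$ -- resides in the proof of Theorem~\ref{Odd}, not in the deduction of the corollary.
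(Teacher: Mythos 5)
Your proof is correct and matches the paper's treatment: the corollary is stated there as an immediate consequence of Theorem~\ref{Odd}, obtained exactly as you describe by feeding the hypothesized $h$ into that theorem and taking $v$ to be the bicyclic unit it specifies. Nothing further is needed.
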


We will verify condition (\ref{SumbConclusion}) for every $h\in G\setminus D_G(a)$, provided $q+1=2p$. Thus we have
\begin{corollary}\label{q+1=2p}
Let $G$, $p$, $q$, $a$ and $u$ as in Corollary~\ref{OddCor1}. If $q+1=2p$ then there is a bicyclic unit $v$ of $\Z G$ and a positive integer $n$ such that $\GEN{u^n,v^n}$ is free non-abelian.
\end{corollary}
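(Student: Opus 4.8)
The plan is to deduce Corollary~\ref{q+1=2p} from Corollary~\ref{OddCor1} by showing that, when $q+1=2p$, inequality~(\ref{SumbConclusion}) holds for \emph{every} $h\in G\setminus D_G(a)$ (such $h$ exists because $D_G(a)\neq G$, by condition~(1) in the definition of a dihedral $p$-critical element). First I would collect the simplifications forced by $q+1=2p$. Since $(q+1)/2=p$ is prime and $a$ has order $p$, the cyclic subgroup $\GEN{g}$ of order $(q+1)/2$ from Theorem~\ref{Odd} coincides with $\GEN{a}$; hence the only $a$-orbit inside $O_i$ is $O_i$ itself, so $d=1$ and $O_{i1}=O_i$ for $i=0,1$. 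Moreover $\GEN{a}$ is a non-split maximal torus of $G$, its normalizer $N_G(\GEN{a})$ is dihedral of order $q+1$, and since $\GEN{a}$ has prime order every element of $N_G(\GEN{a})$ conjugates $a$ to $a^{\pm1}$, so $D_G(a)=N_G(\GEN{a})$. Finally, Theorem~\ref{dpcsimple} gives $p>5$, $p\neq l$ and $2r=\operatorname{ord}_p(l)$, the alternative $p=l^r=5$ being incompatible with $q+1=2p$.

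Next I would rewrite~(\ref{SumbConclusion}). Each of $O_0,O_1,h(O_0),h(O_1),g^h(O_0),g^h(O_1)$ has exactly $p$ elements and $\{O_0,O_1\}$ partitions the projective line; putting $s=\lvert h(O_0)\cap O_0\rvert$ and $t=\lvert g^h(O_0)\cap O_0\rvert$ one gets $\lvert h(O_1)\cap O_0\rvert=p-s$ and $\lvert O_0\cap g^h(O_1)\rvert=\lvert O_1\cap g^h(O_0)\rvert=p-t$. Since $d=1$, the two sides of~(\ref{SumbConclusion}) become $s(p-t)$ and $(p-s)(p-t)$, so~(\ref{SumbConclusion}) is equivalent to $(p-t)(2s-p)\neq0$; as $p$ is odd, $2s-p\neq 0$ automatically, and this reduces to $t\neq p$, i.e.\ to $g^h(O_0)\neq O_0$.

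So it remains to show $g^h(O_0)\neq O_0$ for all $h\in G\setminus D_G(a)$. Suppose for contradiction that $g^h(O_0)=O_0$ for some such $h$. Then $\GEN{a}^h=\GEN{g^h}$ stabilizes $O_0$ setwise, and so does $\GEN{a}$, because $O_0$ and $O_1$ are its orbits; hence $K:=\GEN{a,a^h}$ is contained in the setwise stabilizer $\Stab_G(O_0)$, which is a proper subgroup of $G$ since $G$ acts transitively on $\proj^1(\F_q)$ and $O_0$ is a proper nonempty subset. On the other hand $K$ contains the two subgroups $\GEN{a}$ and $\GEN{a}^h$ of order $p$, and they are distinct because $h\notin N_G(\GEN{a})=D_G(a)$. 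I would now invoke Dickson's classification of the subgroups of $\PSL(2,q)$: using that $p=(q+1)/2>5$ is a prime with $p\neq l$ and $\operatorname{ord}_p(l)=2r$, and that $q-1=2(p-1)$, one checks that a subgroup of $G$ containing two distinct subgroups of order $p$ cannot be cyclic, dihedral, isomorphic to $A_4$, $S_4$ or $A_5$, contained in a Borel subgroup, or contained in a proper subfield subgroup $\PSL(2,l^m)$ or $\PGL(2,l^m)$; hence $K=G$, contradicting $K\leq\Stab_G(O_0)\neq G$. This contradiction proves $g^h(O_0)\neq O_0$, so~(\ref{SumbConclusion}) holds for every $h\in G\setminus D_G(a)$, and Corollary~\ref{OddCor1} yields a bicyclic unit $v$ of $\Z G$ and an integer $n$ with $\GEN{u^n,v^n}$ free non-abelian.

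The step requiring genuine work is the last one, the elimination of every proper subgroup of $\PSL(2,q)$ that could contain two distinct subgroups of order $p$; but once one exploits that $p=(q+1)/2$ is a large prime not dividing $q$ and that $\operatorname{ord}_p(l)=2r$, the Borel and subfield cases collapse to elementary congruences, so I expect only a careful but routine case analysis there.
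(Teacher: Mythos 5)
Your proposal is correct, and its combinatorial core is the same as the paper's: after noting $d=1$ and $O_{i1}=O_i$, both arguments reduce inequality~(\ref{SumbConclusion}) to the impossibility of $2\lvert h(O_0)\cap O_0\rvert=p$ for odd $p$, modulo the nondegeneracy fact $\lvert O_0\cap g^h(O_1)\rvert\neq 0$, i.e.\ $g^h(O_0)\neq O_0$. The paper cancels the common factor $\lvert O_0\cap g^h(O_1)\rvert=\lvert O_1\cap g^h(O_0)\rvert$ without comment (this nonvanishing is implicit in its setup), whereas you isolate $g^h(O_0)\neq O_0$ as the real content and prove it by a separate group-theoretic argument: $\GEN{a,a^h}$ would lie in the proper stabilizer of $O_0$ yet contain two distinct subgroups of order $p$, which Dickson's classification of subgroups of $\PSL(2,q)$ rules out. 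That argument is sound (the subfield and Borel cases do collapse via $\operatorname{ord}_p(l)=2r$ and $p\nmid q(q-1)$), but it is much heavier than needed: by~(\ref{StabilizerO}) the setwise stabilizer of $O_0$ is contained in $D$, and the proof of Lemma~\ref{NStab} already shows $g^h\notin D$ for $h\notin D$ (since $\GEN{g}$ is the unique subgroup of its order in the dihedral group $D$), so $g^h(O_0)\neq O_0$ is immediate; alternatively, $D$ dihedral of order $2p$ has a unique subgroup of order $p$, which kills $\GEN{a,a^h}\le\Stab_G(O_0)$ at once. So your extra machinery buys nothing here beyond making explicit a step the paper elides, but it does make the proof self-contained at that point and would survive even without equation~(\ref{StabilizerO}).
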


With the help of computers we have verified the existence of $h\in G\setminus D_G(a)$ satisfying (\ref{SumbConclusion}) for every odd prime power $q<10000$. So the Conjecture has also been verified for the group $\PSL(2,q)$ with $q<10000$.

\section{Dihedral $p$-critical elements in simple groups}\label{SectionClasificationDpC}

The dihedral $p$-critical elements in non-simple finite groups were classified in \cite{GR11} for $p>3$.
In this section we obtain the classification of dihedral $p$-critical elements of finite simple groups, that is we prove Theorem~\ref{dpcsimple}.
%
%
%

Note that for $G$ simple, the third conditions in the definition of dihedral $p$-critical vanishes and the first condition only states that $G$ should be non-abelian. Therefore if $G$ is a simple group and $g \in G$ then $g$ is dihedral $p$-critical in $G$ if and only if $D_G(g)$ is the unique maximal subgroup of $G$ containing $g$. In particular, if $g$ is dihedral $p$-critical in a simple group $G$ then $N_G(\GEN{g})=D_G(g)$, that is $g$ is not conjugate to any power of itself other than $g$ and $g^{-1}$.

The idea of the proof is straightforward.  We will show that it is only true in a very few cases
that an element of prime order is contained in a unique maximal subgroup and in those cases
(aside from the groups in Theorem \ref{dpcsimple}), the unique maximal subgroup is not
the dihedralizer of the element.

\begin{lemma} \label{alt}  Let $G=A_n, n > 4$.  If $g \in G$ is dihedral $p$-critical,
then $n=p=5$.
\end{lemma}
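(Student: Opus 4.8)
The plan is to reduce to a statement about maximal subgroups of $A_n$ containing a given element $g$ of prime order $p$, and show that uniqueness of such a maximal subgroup forces $n=p=5$. Recall from the discussion preceding the lemma that $g$ being dihedral $p$-critical in the simple group $G=A_n$ is equivalent to: $D_G(g)$ is the \emph{unique} maximal subgroup of $G$ containing $g$; in particular $g$ lies in a unique maximal subgroup and that maximal subgroup equals $D_G(g)=N_G(\GEN{g})$ (so $g$ is conjugate only to $g$ and $g^{-1}$, not to any other power of itself).

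First I would analyze the cycle type of $g$. Since $g$ has prime order $p$, it is a product of, say, $k\ge 1$ disjoint $p$-cycles moving $kp$ points and fixing $n-kp$ points. The main case division is whether $g$ moves all $n$ points (i.e. $kp=n$) or has a fixed point (i.e. $kp<n$).

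\emph{Case $kp < n$:} here $g$ lies in an intransitive maximal subgroup, namely a stabilizer $(S_m\times S_{n-m})\cap A_n$ of the $m$-point set that $g$ moves, $m=kp$. If $n-m\ge 2$ then $g$ also lies in the point-stabilizer $A_{n-1}$ (fix one of its fixed points), and these two maximal subgroups are distinct, contradicting uniqueness — unless $m=n-1$, i.e.\ $g$ is a single $p$-cycle with $n=p+1$. But a $p$-cycle in $A_{p+1}$ also lies in the transitive subgroup it generates together with a suitable element, and more directly $g=(1\,2\,\cdots\,p)$ lies both in the stabilizer $A_p$ of the last point and (for $p+1$ not too small) in an imprimitive or primitive overgroup; in any event for $p>3$ one checks $g$ lies in at least two maximal subgroups. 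When $m=kp\le n-2$ with $k\ge 2$, one similarly exhibits a second intransitive maximal subgroup by splitting the moved points differently (e.g.\ the set-stabilizer of $p$ of the moved points), so uniqueness fails. This disposes of all $kp<n$ except small configurations to be checked by hand, and none survive for $p>3$ except possibly those already covered in the transitive case.

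\emph{Case $kp = n$:} now $g$ is a transitive or semiregular permutation. If $k=1$, $g$ is an $n$-cycle with $n=p$ prime; then $g\in\GEN{g}\rtimes\GEN{\text{Frobenius automorphism}}$, a Frobenius group of order $p(p-1)/\gcd$ inside $A_p$, and for $p>5$ this is contained in a larger primitive maximal subgroup while $g$ also lies in other maximal subgroups (e.g.\ one conjugates by an element normalizing a larger cycle structure), but more usefully: for $p\ge 5$, $g$ is an element of a maximal subgroup isomorphic to $\mathrm{AGL}(1,p)\cap A_p$, yet the normalizer $N_G(\GEN g)$ is a proper subgroup of it when $p>5$ because then $(p-1)/2>2$, so $g$ is conjugate to a power of itself other than $g^{\pm1}$; hence $D_G(g)$ is not maximal and not the unique overgroup. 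When $p=5$, $n=5$: here $G=A_5$, $\GEN g=C_5$, $N_G(\GEN g)=D_{10}$ which is indeed maximal in $A_5$, and $D_{10}=D_G(g)$ since the only automorphisms of $C_5$ realized are inversion — so $g$ \emph{is} dihedral $5$-critical, giving exactly the asserted conclusion. If $k\ge 2$, $g$ lies in the imprimitive maximal subgroup $(S_p\wr S_k)\cap A_n$ preserving the partition into its $p$-cycle supports, and also (after regrouping the $k$ blocks into two groups of sizes $a$ and $k-a$ with $0<a<k$) in the intransitive maximal subgroup stabilizing a union of $ap$ of the moved points; for $k\ge 3$ these are two distinct maximal overgroups, contradicting uniqueness, and for $k=2$ one uses the two distinct imprimitive systems (blocks of size $p$ vs.\ blocks of size $2$, the latter available since each transposition-free... ) — more carefully, $g$ with $k=2$ has order $p$ acting as two $p$-cycles on $2p$ points, so it lies in the wreath-type maximal subgroup with blocks of size $p$ \emph{and}, since $p$ is odd, also in one with blocks of size $2$ only if $g$ preserves such a pairing, which it does not in general; the clean argument here is that $g\in A_{2p}$ also centralizes a $p$-cycle, so $C_G(g)$ is nonabelian of order divisible by $p^2$, placing $g$ in the centralizer's various overgroups — in all subcases for $p>3$ one produces a second maximal subgroup containing $g$.

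The main obstacle I anticipate is the bookkeeping in the transitive/semiregular case $kp=n$: one must be careful to produce two genuinely distinct \emph{maximal} subgroups containing $g$ (not merely two subgroups), which requires invoking the O'Nan--Scott description of maximal subgroups of $A_n$ to rule out the possibility that the imprimitive overgroup and the obvious intransitive overgroup coincide or that one is contained in the other. I expect the cleanest route is: show that for $p>3$ and $n>5$, the element $g$ always lies either in two distinct intransitive maximal subgroups, or in a transitive maximal subgroup together with $N_G(\GEN g)$ with the latter \emph{not} maximal (because $p-1>4$ forces extra automorphisms of $\GEN g$ to be realized in $G$, or because $N_G(\GEN g)$ is properly contained in an affine or wreath-product maximal subgroup). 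The residual small cases ($n\le$ some explicit bound like $12$, or $p=5,7$ with small $k$) are then finished by direct inspection of the maximal subgroup lattices, leaving $n=p=5$ as the only survivor.
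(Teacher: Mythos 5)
Your plan has genuine gaps in exactly the branches you flag with ``one checks'': the single $p$-cycle in $A_{p+1}$, the fixed-point-free case $n=2p$ with $k=2$, and the ``residual small cases'' deferred to an inspection of maximal subgroup lattices via O'Nan--Scott. None of these is actually argued, and they are hard precisely because your strategy insists on producing two genuinely distinct \emph{maximal} overgroups of $g$: for instance, for a $p$-cycle in $A_{p+1}$ the only obvious overgroup is the point stabilizer $A_p$, and exhibiting a second maximal one forces you to classify the primitive maximal subgroups of $A_{p+1}$, which is far more machinery than the lemma needs.

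The missing idea is that maximality is never required. Condition 2 of the definition says $H=D_H(g)$ for \emph{every} proper subgroup $H$ containing $g$; equivalently, if $g$ lies in any proper subgroup $H$ with $D_H(g)\ne H$, then $g$ is not dihedral $p$-critical (a maximal subgroup $M\supseteq H$ contains $g$ and cannot equal $D_G(g)$, since $M=D_G(g)$ would force $D_H(g)=H\cap D_G(g)=H$). This collapses all your problematic branches at once: if $p\nmid n$ then $g$ fixes a point and lies in a point stabilizer $H\cong A_{n-1}$, which is nonabelian simple for $n>5$, so $D_H(g)\ne H$; if $p\mid n$ and $p<n$ then $g$ stabilizes the support of one of its $p$-cycles, and that set stabilizer contains $A_p\times A_{n-p}$, so again $D_H(g)\ne H$. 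The only case that genuinely needs a separate argument is $n=p$, where your reasoning is correct and is the paper's: for $p>5$ there are more than two nonzero squares modulo $p$, so the $p$-cycle $g$ is $A_p$-conjugate to $g^e$ for some $e\not\equiv\pm1\pmod p$, contradicting $N_G(\GEN{g})=D_G(g)$. With the condition-2 shortcut your case division reduces to the paper's short proof; as written, however, your proposal does not constitute a complete proof.
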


\begin{proof}  If $n=p=5$, it is straightforward to check
than any element of order $5$ is dihedral $p$-critical.
Assume that $n > 5$.   If $p$ does not divide $n$, then
$g$ has a fixed point and so is contained in $H$, the stabilizer of a point.
Since $H \cong A_{n-1}$ is simple, $H \ne D_H(g)$, a contradiction.

If $p< n$ and $p\mid n$, then $g$ is contained in a subgroup $H$ which is the
stabilizer of a subset of size $p$.  So $H \cong A_p \times A_{n-p}$ and
clearly $H \ne D_H(g)$.

Finally suppose that $p=n > 5$.  In $S_n$, $g$ is conjugate to $g^e$ for any
$e$ prime to $p$.   Thus, in $A_n$, $g$ is conjugate to $g^e$ for any
$e$ prime to $p$ with $e$ a square modulo $p$.  Since $p> 5$,
$(p-1)/2 > 2$ and so $g$ is conjugate to $g^e$ for some $1 < e < p-1$,
a contradiction.
\end{proof}

The next result is not required but it does simplify some cases.

\begin{lemma}  \label{cyclic}  Let $g$ be a dihedral $p$-critical
element of a finite simple group $G$.   Then the Sylow $p$-subgroup of $G$ is cyclic.
\end{lemma}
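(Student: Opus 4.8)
The plan is to translate the hypothesis into a statement about a \emph{unique} maximal subgroup, use that to get strong control of a Sylow $p$-subgroup, and then show that a non-cyclic Sylow $p$-subgroup would force a $p$-local subgroup not contained in that unique maximal subgroup.

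First I would record the reformulation already noted in the paragraph preceding Lemma~\ref{alt}: since $G$ is simple, $g$ is dihedral $p$-critical precisely when $M:=D_G(g)$ is the unique maximal subgroup of $G$ containing $g$, and then $M=N_G(\GEN{g})$ (so $\GEN{g}\trianglelefteq M$) and $[M:C_G(g)]\le 2$. Fix a Sylow $p$-subgroup $P$ of $G$ with $g\in P$. As $P$ is a proper subgroup containing $g$, we get $P\le M$, hence $\GEN{g}\trianglelefteq P$; since a $p$-group acting on a group of order $p$ acts trivially, $P\le C_G(g)$, i.e.\ $g\in Z(P)$. Because $p$ is odd (which in our application is automatic, $p>3$) and $[M:C_G(g)]\le 2$ while $P\le M$, also $P\in\mathrm{Syl}_p(C_G(g))$ and $\GEN{g}\le Z(C_G(g))$; in particular $M$, hence $C_G(g)$, contains a full Sylow $p$-subgroup of $G$.

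Now suppose, for contradiction, that $P$ is not cyclic. Since $p$ is odd, a non-cyclic $p$-group has a non-cyclic normal abelian subgroup; combining this with $g\in Z(P)$ one produces a subgroup $A\trianglelefteq P$ with $A\cong C_p\times C_p$ and $\GEN{g}\le A$ (take $\Omega_1$ of such a subgroup, enlarge it by $\GEN{g}$ if necessary, and pick a two-dimensional $P$-invariant subspace through $\GEN{g}$). As $A\trianglelefteq P\le M$, every nontrivial $a\in A$ commutes with $g$, so $C_G(a)\le M$ and $N_G(\GEN{a})\le M$ for all such $a$; consequently $N_G(A)\le M$, $A\le Z(C_G(A))$, and $C_P(A)\in\mathrm{Syl}_p(C_G(A))$. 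Moreover the image of $N_G(A)$ in $\Aut(A)\cong\GL(2,p)$ stabilises the line $\GEN{g}$ and acts on it through $\{\pm1\}$, so $N_G(A)/C_G(A)$ embeds in a subgroup of $\GL(2,p)$ of order $2p(p-1)$, whence $[P:C_P(A)]\le p$.

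The remaining task — the crux — is to turn this into a contradiction, and it exploits how constrained $M=D_G(g)$ is: $M=N_G(O_p(M))$ with $\GEN{g}\le Z(O_p(M))$, and $|M|\le 2\,|C_G(g)|$ with $g$ central in $C_G(g)$. I would invoke a normal $p$-complement theorem — Burnside's when $P$ is abelian, and Thompson's theorem applied to $Z(J(P))$ (another characteristic subgroup of $P$, so $N_G(Z(J(P)))\le M$ as well) in general — to see that, since $G$ is simple, these $p$-local subgroups $N_G(A)$, $N_G(Z(P))$, $N_G(Z(J(P)))$ are not $p$-nilpotent; then the non-cyclicity of $P$ makes them (together with $C_G(g)\ge P$) too large to be squeezed inside the maximal subgroup $M$, or gives them a structure incompatible with lying in $D_G(g)$, contradicting the uniqueness — indeed the existence — of $M$ as a maximal subgroup containing $g$. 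Extracting this contradiction cleanly and uniformly is the main obstacle; absent a uniform argument one would fall back on the classification of the simple groups with such restricted $p$-local structure. (The case $p=2$, where $D_G(g)=C_G(g)$, is excluded by the reduction to $p>3$, or else follows from the non-existence of non-abelian simple groups with cyclic Sylow $2$-subgroup.)
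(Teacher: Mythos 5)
Your preliminary reductions are correct and useful: from the uniqueness of the maximal subgroup $M=D_G(g)=N_G(\GEN{g})$ you rightly get $P\le M$, $g\in Z(P)$, $P\in\mathrm{Syl}_p(C_G(g))$, and (if $P$ is non-cyclic) a normal subgroup $A\cong C_p\times C_p$ of $P$ containing $g$ with $N_G(A)\le M$ and $[P:C_P(A)]\le p$. But the proof stops exactly where the difficulty begins. The final paragraph does not derive a contradiction; it only asserts that Burnside/Thompson normal $p$-complement theorems will make the $p$-local subgroups ``too large'' or ``incompatible'' with lying in $M$. As far as your argument shows, nothing is incompatible: simplicity of $G$ forces $N_G(Z(J(P)))$ (which indeed lies in $M$, since $Z(P)\le Z(J(P))$) to be non-$p$-nilpotent, but $M=D_G(g)$ is an extension of $C_G(g)$ by a group of order at most $2$ and there is no a priori obstruction to $C_G(g)$ failing to be $p$-nilpotent --- e.g.\ nothing you have established rules out $C_G(g)=\GEN{g}\times L$ with $p\mid\lvert L\rvert$ and all the local subgroups you construct sitting comfortably inside $M$. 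So the crux is missing, and you acknowledge as much when you say you would ``fall back on the classification.''

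This is a genuine gap rather than a cosmetic one, because the statement is not elementary: the paper itself does not prove it from scratch but derives it in one line from Theorem~C of Guralnick--Robinson \cite{GR93} (a substantial theorem on extensions of the Baer--Suzuki theorem) together with Lemma~\ref{alt}, which disposes of the alternating-group configurations. Your hypotheses feed naturally into that result --- the condition $H=D_H(g)$ for all proper $H\ni g$ is precisely the kind of strong fusion/commuting restriction on the conjugacy class of $g$ that \cite{GR93} exploits --- so the honest fix is to cite it (plus Lemma~\ref{alt} for the exceptional cases), not to hope that classical normal $p$-complement theorems alone will close the argument.
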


\begin{proof}  This follows by \cite[Theorem C]{GR93} and our result for alternating groups.
\end{proof}

\begin{lemma}  \label{sl2}  Assume that $p > 3$ is prime.
Let $G=\PSL(2,q)$ with $q = \ell^r > 5$ and $\ell$ a prime integer.
Then  $G$ has a dihedral $p$-critical element if and only
if $\ell \ne p > 5$ and the
multiplicative order of $\ell$ modulo $p$ is $2r$.
\end{lemma}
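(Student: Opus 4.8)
The plan is to read the lemma off Dickson's classification of the subgroups of $\PSL(2,q)$. Write $d=\gcd(2,q-1)$; recall that, up to conjugacy, a proper subgroup of $G=\PSL(2,q)$ lies in a Borel subgroup $E_q\rtimes C_{(q-1)/d}$, in a dihedral group of order $2(q-1)/d$ or $2(q+1)/d$ (the normalizers of a split, resp.\ nonsplit, maximal torus), in a subfield subgroup $\PSL(2,q_0)$ or $\operatorname{PGL}(2,q_0)$ with $q_0=\ell^f$ and $f\mid r$, or in one of $A_4,S_4,A_5$. By the observation preceding Lemma~\ref{alt}, for the simple group $G$ an element $g$ of order $p$ is dihedral $p$-critical if and only if $D_G(g)$ is the unique maximal subgroup of $G$ containing $g$. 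I will use two standard facts: for $g$ of prime order the centralizer $C_G(g)$ is abelian (a torus if $p\ne\ell$, and $E_q$ if $p=\ell$); and when $p\mid q+1$ all subgroups of $G$ of order $p$ are conjugate, since they lie in nonsplit tori, any two of which are conjugate, each containing a unique subgroup of order $p$. The arithmetic input is that $p\mid q-1\iff\operatorname{ord}_p(\ell)\mid r$, whereas ``$p\mid q+1$ and $\operatorname{ord}_p(\ell)=2r$'' is equivalent to ``$p\mid q+1$ and $p\nmid q_0^2-1$ for every proper subfield $\F_{q_0}\subsetneq\F_q$'' — the proper divisors of $2r$ being the odd divisors of $r$ (on which the divisibility is automatic once $\operatorname{ord}_p(\ell)\nmid r$) together with the integers $2f$ for $f\mid r$, $f<r$.

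For the \emph{only if} direction I would produce, in each remaining case, a proper subgroup $H$ of $G$ containing a conjugate of $g$ with $H\ne D_H(g)$, contradicting clause~(2); note that, $C_G(g)$ being abelian, $D_H(g)=H$ would force $H$ to be metabelian. If $p=\ell$ with $r>1$, the Sylow $p$-subgroup is not cyclic, contradicting Lemma~\ref{cyclic}. If $p=\ell$ with $r=1$, then $q=p>5$ and $g$ is unipotent: the torus of the Borel $B\ni g$ acts faithfully on $\GEN{g}\cong C_p$ through the squares of $\F_p^\times$, a group of order $(p-1)/2>2$, so $g$ is conjugate in $B$ to some power $\ne g^{\pm1}$ and $D_B(g)\subsetneq B$. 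If $p\ne\ell$ with $p\mid q-1$, then $C_G(g)$ is a split torus lying in a Borel $B$ on whose unipotent radical $g$ acts fixed-point-freely, so $D_B(g)=C_G(g)\subsetneq B$. If $p\ne\ell$, $p\mid q+1$ and $\operatorname{ord}_p(\ell)<2r$, then $p\mid q_0^2-1$ for a proper subfield $q_0=\ell^f$; since $p>3$ forces $q_0\ge4$, the subgroup $H=\PSL(2,q_0)$ is non-solvable and (by the conjugacy fact) contains a conjugate of $g$. Finally if $p=5$, then $5\mid q+1$ gives $A_5\le G$, which likewise contains a conjugate of $g$, so take $H=A_5$. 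These cases are exhaustive, so $\ell\ne p>5$ and $\operatorname{ord}_p(\ell)=2r$.

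For the \emph{if} direction, assume $\ell\ne p>5$ and $\operatorname{ord}_p(\ell)=2r$, so $p\mid q+1$ and $p\nmid q-1$. Then $g$ lies in a nonsplit torus $T=C_G(g)$; as $T$ is cyclic, $\GEN{g}$ is characteristic in it and $N_G(\GEN{g})=N_G(T)$, which is the dihedral group of order $2(q+1)/d$; since $N_G(T)/T$ inverts $T$, $D_G(g)=N_G(T)$. This subgroup is maximal in $G$ ($p\mid q+1$ and $p>5$ force $q\ge13$, beyond the exceptions to its maximality). And $g$ lies in no other maximal subgroup: Borels and split-torus normalizers are ruled out by $p\nmid q-1$; $A_4,S_4,A_5$ by $p>5$; subfield subgroups by the arithmetic above (they would need $\operatorname{ord}_p(\ell)\mid 2f$ with $f<r$); and a second conjugate $N_G(T')$ of this dihedral group containing $g$ would force $g\in T'$, the elements of $N_G(T')\setminus T'$ being involutions, whence $T'=C_G(g)=T$. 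Hence $D_G(g)$ is the unique maximal subgroup containing $g$, so $g$ is dihedral $p$-critical.

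The step I expect to be most delicate is the precise handling of small and exceptional cases: making the arithmetic equivalence between $\operatorname{ord}_p(\ell)=2r$ and ``no proper subfield of $\F_q$ has an element of order $p$'' airtight, pinning down the exact maximality conditions for the dihedral and subfield subgroups of $\PSL(2,q)$, and checking in the \emph{only if} direction that each exhibited $H$ really violates clause~(2) — that is, that $D_B(g)$ is a proper subgroup of the Borel $B$ in the unipotent and split cases, and that $\PSL(2,q_0)$ (with $q_0\ge4$) and $A_5$ are non-solvable and hence not their own dihedralizers of $g$.
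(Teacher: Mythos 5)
Your proof is correct and follows essentially the same route as the paper: reduce (via the remark before Lemma~\ref{alt} and the cyclic Sylow $p$-subgroup) to deciding when $D_G(g)$ is the unique maximal subgroup containing $g$, split into the cases $p\mid q$, $p\mid q-1$, $p\mid q+1$, dispose of $p=5$ by exhibiting $A_5\le G$, and in the case $p\mid q+1$ identify $D_G(g)$ with the maximal dihedral normalizer of a nonsplit torus and detect failure of uniqueness exactly through subfield subgroups $\PSL(2,\ell^s)$ with $s<r$. The only cosmetic differences are that you invoke Dickson's subgroup classification explicitly where the paper says ``by inspection'' (citing Suzuki), and you obtain the $A_5$ for $p=5$ from that classification rather than by reducing the two-dimensional representation of $\SL(2,5)$ modulo $\ell$.
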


\begin{proof}  By Lemma~\ref{cyclic}, the Sylow $p$-subgroup of $G$ is cyclic and
so all subgroups of order $p$ are conjugate. Assume that $G$ has
a dihedral $p$-critical element $g$.

Since $p$ divides $\lvert \PSL(2,q)\rvert=q(q+1)(q-1)$, either $p=\ell$ or $
q\equiv \pm 1 \mod p$.

First consider the case that $p=5$.
We claim that $\PSL(2,5)$ is a proper subgroup of $G$.   If $\ell =5$, this
is clear.  We note that $\SL(2,5)$ has a two dimensional representation
in characteristic $0$.    Thus, we get a two dimensional  representation
in characteristic $\ell$.   Since all character values of the
complex representation are in the field $\mathbb{Q} [\sqrt{5}]$
and $q \equiv \pm 1 \pmod 5$, the corresponding representation
in characteristic $\ell$ has all traces in the  field of $q$ elements.
Thus, this (irreducible) representation in characteristic $\ell$ is defined
over the field of $q$ elements whence $\PSL(2,5)$ embeds in $\PSL(2,q)$.
Thus, if $p=5$,  $g \in H \cong A_5$ and obviously $H \ne D_H(g)$.

So we may assume that $p > 5$.

If $p$ divides $q(q-1)$, then we may assume that $g$ is an upper
triangular matrix and so contained in $B$ the group of upper triangular
matrices.  It is straightforward to see that in either case  $B \ne D_B(g)$,
a contradiction.

Finally suppose that $p$ divides $q+1$.  Let $g$ have order $p$.
Then $J:=N_G(\langle g \rangle)$ is a dihedral group of order $q+1$.
In particular, we see that $J=D_G(g)$.  Since $q+1$ is divisible by
a prime at least $7$, it follows that $q \ge 13$.   It is well known
\cite[Theorem~6.17]{Suzuki} that $J$ is a maximal subgroup of $G$ for such $q$.
Thus, the only issue is whether $J$ is the unique maximal subgroup
containing $g$.    Note that the since $p\mid (q+1)$, the multiplicative
order of $p$ modulo $\ell$ is $2s$ for some integer $s$ dividing $r$.
If $s < r$, then $PSL(2,\ell^s)$ is isomorphic to a proper subgroup of
$G$ and has order divisible by $p$.  Thus, $g$ would be contained
in a subgroup isomorphic to $PSL(2,\ell^s)$  and this is a simple group,
whence it cannot be contained in $J$.

Conversely, if $s=r$, by inspection of the subgroups of
$G$ \cite[Theorem~6.17]{Suzuki},  $J$ is the unique maximal subgroup containing
$g$ and the result follows.
\end{proof}

It follows by inspection of the maximal subgroups \cite{Wilson} or by inspection
of the normalizers of subgroups of order $p$ \cite{GL83} that there are no examples
with sporadic simple groups.

\begin{lemma} \label{spor}  If $G$ is a sporadic simple group, then $G$
contains no dihedral $p$-critical elements.
\end{lemma}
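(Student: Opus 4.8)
The plan is to reduce the statement to a finite inspection of the sporadic groups and then carry it out using known subgroup and local data. Recall from the discussion at the start of this section that, for a simple group $G$, an element $g$ of order $p$ is dihedral $p$-critical if and only if $D_G(g)$ is the \emph{unique} maximal subgroup of $G$ containing $g$; in particular such a $g$ satisfies $D_G(g)=N_G(\GEN{g})$, equivalently $N_G(\GEN{g})/C_G(g)$ has order at most $2$ (i.e.\ $g$ is conjugate in $G$ only to $g$ and $g^{-1}$ among its own powers), and $D_G(g)$ is itself a maximal subgroup. By Lemma~\ref{cyclic} we may moreover assume that a Sylow $p$-subgroup of $G$ is cyclic; as no non-abelian simple group has cyclic Sylow $2$-subgroup, $p$ is odd, and for each sporadic $G$ this leaves only those primes $p$ with $p\,\|\,\lvert G\rvert$, together with $p=3$ for $J_1$ (the unique sporadic group with $3\,\|\,\lvert G\rvert$).

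First I would dispose of almost all of the pairs $(G,p)$ by examining the normalizer $N_G(\GEN{g})$, which for all relevant $p$ is recorded in the tables of $p$-local subgroups of \cite{GL83} and can also be read off the maximal-subgroup lists in \cite{Wilson}. For a cyclic Sylow $p$-subgroup, $N_G(\GEN{g})$ has the shape $C_G(g):k$ with $k\mid p-1$, and in the overwhelming majority of cases $k>2$: the largest primes typically give a Frobenius group $p:k$ with $k$ a proper multiple of $2$ (for instance $71:35$, $59:29$, $47:23$, $31:15$ and $29:14$ inside the Monster), and for the remaining primes one reads off the same conclusion, often because $g$ already lies in a subgroup such as $A_n$ ($n\ge 5$) or $\PSL(2,\ell)$ inside which $N(\GEN{g})/C(g)$ has order bigger than $2$. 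Whenever $k>2$, $g$ is conjugate to a power $g^e$ with $e\not\equiv\pm1\pmod p$, so that $D_G(g)\ne N_G(\GEN{g})$ and $g$ is not dihedral $p$-critical. Only a very short list of pairs survives this step; in fact one is left essentially with $(J_1,3)$, $(J_1,5)$ and $(J_3,5)$.

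For each surviving pair I would exhibit a subgroup $S$ with $g\in S\le G$ that is \emph{not} contained in $N_G(\GEN{g})$ — for example a non-solvable subgroup, such as a copy of $A_5$ or of some $\PSL(2,\ell)$, containing an element of order $p$ (of which, by cyclicity of the Sylow $p$-subgroup, there is a single conjugacy class). Such an $S$ satisfies $S\ne D_S(g)$, hence $S\not\subseteq D_G(g)$, so any maximal subgroup of $G$ containing $S$ is distinct from $D_G(g)$; therefore $D_G(g)$ is not the unique maximal subgroup of $G$ containing $g$ and $g$ is not dihedral $p$-critical. The required $S$ is immediate from \cite{Wilson}: an element of order $3$ of $J_1$ lies in its (simple) maximal subgroup $\PSL(2,11)$, an element of order $5$ of $J_1$ lies both in $D_6\times D_{10}=N_{J_1}(\GEN{g})$ and in $2\times A_5$, and an element of order $5$ of $J_3$ lies in the two distinct maximal subgroups $\PSL(2,19)$ and $2^{1+4}:A_5$, so in each case it lies in at least two maximal subgroups. (In several cases one may instead simply note from \cite{Wilson} that $N_G(\GEN{g})$ is not maximal, which by the criterion above already precludes $g$ from being dihedral $p$-critical.)

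The main difficulty here is organizational rather than conceptual: the argument is a uniform but somewhat lengthy walk through all $26$ sporadic groups and their admissible primes — up to $p=71$ for the Monster, whose maximal-subgroup list is long — and the only genuinely delicate points are the three small-prime cases above, where $N_G(\GEN{g})$ can itself be maximal and one must actually produce a second maximal overgroup of $g$ rather than quote a normalizer quotient. No ingredients beyond the criterion recalled above, Lemma~\ref{cyclic}, and the classification data of \cite{GL83} and \cite{Wilson} enter.
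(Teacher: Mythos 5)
Your proposal is correct and takes essentially the same route as the paper, whose entire ``proof'' is the preceding remark that the claim ``follows by inspection of the maximal subgroups [Wilson] or by inspection of the normalizers of subgroups of order $p$ [GL83].'' You simply carry out that inspection explicitly --- eliminating almost every pair $(G,p)$ via $\lvert N_G(\GEN{g})/C_G(g)\rvert>2$ (so that $D_G(g)$ is properly contained in $N_G(\GEN{g})$ and hence not maximal) and exhibiting a second maximal overgroup of $g$ in the few surviving cases --- so your write-up is, if anything, more detailed than the paper's.
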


We now prove Theorem \ref{dpcsimple}.  So let
$G$ be a finite simple group and let $g$ be a dihedral $p$-critical element in $G$.
By Lemmas~\ref{alt}, \ref{sl2} and \ref{spor}, we
may assume that $G$ is a simple group of Lie type in characteristic $\ell \ne p$
and is not isomorphic to $\PSL(2,q)$ for any $q$.   We will obtain a contradiction
and complete the proof.

We refer the reader to \cite{Carter89} or \cite{GLS98} for basic facts about
the finite groups of Lie type.

We proceed in a series of steps.

\noindent \emph{Step 1}.  $g$ is not  unipotent.  \newline
\emph{Proof}.  The Sylow $\ell$-subgroup of a Chevalley group   is not cyclic unless $G=\PSL(2,\ell)$.

\noindent \emph{Step 2}.  $g$ is not contained in a parabolic subgroup of $G$.
\newline
\emph{Proof}.  Suppose  $g$ is contained in a
parabolic subgroup $P$ of $G$.   We may assume that $P$ is maximal.   Write $P=LQ$ where $L$ is a Levi subgroup
of $P$ and $Q=O_{\ell}(P)$ is the unipotent radical of $P$.   Since $p \ne \ell$,
$L$ contains a Sylow $p$-subgroup of $P$ and so we may assume that
$g \in L$.  Any Levi subgroup is contained in at least two distinct maximal
subgroups $P$ and its "opposite".  This contradicts the fact that $g$ is contained
in a unique maximal subgroup.

\noindent \emph{Step 3}.  $g$ is semisimple regular  and $N_G(\langle g \rangle)$
is the normalizer of a maximal torus $T$ of $G$.
\newline
\emph{Proof}.  If $g$ is not semisimple regular, then it commutes with a unipotent
element.   Thus,  by the Borel-Tits Lemma \cite[Thm.~3.1.3]{GLS98}, $g$ is contained in a
parabolic subgroup $P$ of $G$,  a contradiction to Step 2.    If $g$ is semisimple regular, it is contained
in a unique maximal torus and so the last assertion
follows immediately.

Thus we know that  $J=N_G(\langle g \rangle)  = D_G(g)= N_G(T)$ with $T$ a maximal torus of $G$.
We will show in every case that either $N_G(T) \ne D_G(g)$ or that $N_G(T)$ is not
maximal or is not the unique maximal subgroup containing $g$.

We will obtain
a contradiction arguing separately for exceptional and classical groups.

\noindent \emph{Step 4}.  $G$ is not an exceptional Chevalley group.
\newline
\emph{Proof}.   We may assume that $N_G(T)$ is maximal.  All such possibilities
are listed in  \cite[Table 5.2]{LSS92}.  Since $g$ is dihedral $p$-critical and
$C_G(T)=C_G(g)$, it follows that  $[N_G(T):T] \le 2$.
This only happens for rank one Chevalley groups; more specificially if:
\begin{enumerate}
\item $G={^2}B_2(2^{2a+1}), a > 1$ and the maximal torus is contained
in a Borel subgroup; or
\item $G={^2}G_2(3^{2a+1}), a > 1$ and the maximal torus is contained
in the centralizer of involution which is isomorphic to $2 \times
PSL_2(3^{2a+1})$,
\end{enumerate}
So in both cases we see that $g \in H$ where $H \ne D_H(g)$.

\noindent \emph{Step 5}.  $G$ is not a simple classical group.
\newline
 \emph{Proof}.  Let $V$ be the natural module for $G$.
 We induct on $d:=\dim V$ and assume that $d$ is minimal.

  If $g$ preserves
 a totally singular subspace of $V$ (in the case that $G=\PSL(V)$,
 we view any subspace as totally singular), then $g$ is contained
 in a parabolic subgroup of $G$ which contradicts Step 2.    In
 particular, this cannot be the case of $G=\PSL(V)$.

%

Suppose that $g$ does not act irreducibly. Let $W$ be a nontrivial irreducible $\langle g \rangle$ submodule of $V$.
Since  $g$ does not preserve any totally singular subspace, we see that $W$ s nondegenerate.
Now $g$ must act trivially on $W^{\perp}$ (because the Sylow p-subgroup of G is cyclic).  In particular, $g$ stabilizes a nondegenerate hyperplane.

 It follows that the stabilizer of $W$ is $D_G(g)$.
 Since $g$ acts irreducibly on $W$, it follows that the
 normalizer of $\langle g \rangle$ is solvable (this can already
 be seen in $\GL(d,q)$;  since $g$ acts irreducibly, by Schur's
 Lemma,  the centralizer $C$ of $g$ is cyclic and if $N$ is the normalizer of
of $C$, then $N/C$ embeds in the Galois group of
the field  generated by $g$ over the base field).

Of course, the stabilizer of $W$ contains a Chevalley group
of one smaller dimension and so the result holds for $q \ge 4$
(since otherwise the stabilizer of  $W$ will never be solvable).  If
$q=2$ or $3$, then  since $G$ is not $\PSL$, $G = \PSU(4,2)$
and $p=3$, a contradiction.

Finally, suppose that $g$ acts irreducibly on $V$.   Thus, the centralizer
of $g$ is a torus $T$ acting irreducibly with $C:=C_G(T)=C_G(g)$.
Set $N=N_G(T)$.   Then $N/C$ is cyclic of order $d > 2$ and so
$N \ne D_N(g)$, a contradiction.
 This completes the proof.

\section{Strategy}\label{SectionCommon}

In the remainder of the paper $G=\PSL(2,q)$, $a$ is a dihedral $p$-critical element of $G$, with $p$ prime, and $u=u_{k,m}(a)$, a Bass unit of infinite order. By \cite[Lemma~2.1]{GR11}, $k\not\equiv \pm 1 \mod p$, so that $p>3$ and we may assume without loss of generality that $2< k < p-1$.
The goal is looking for a bicyclic unit $v$ in $\Z G$ such that $\GEN{u^n,v^n}$ is free for some integer $n$. As $u^n=u_{k,nm}(a)$, there is no loss of generality in assuming that $p$ divides $m$.

The case $p=5$ is easy. Indeed, in this case $k$ is either $2$ or $3$ and hence $m$ is multiple of 4. Moreover $u_{2,m}(a)\inv = u_{3,m}(a^2)$ and therefore we may assume without loss of generality that $u=u_{2,m}(a)=u_{2,4}(a)^{\frac{m}{4}}$.
By Theorem~\ref{dpcsimple}, we have $q=5$, so that $G=A_5$ and $a$ is a 5-cycle. By \cite[Theorem~7.3]{GR08}, or rather its proof, there is a bicyclic unit $v$ of $\Z G$ such that $\GEN{u^n,v^n}$ is free for some $n$, as desired.

So in the remainder of the paper $p\ge 7$.
By Theorem~\ref{dpcsimple} and Lemma~\ref{sl2}, $q=l^r$ with $l$ prime and the order of $l$ modulo $p$ is $2r$.
We consider separately the cases where $q$ is even or odd in Sections 3 and 4 respectively.
In this section we introduce notation and information which is common for the two cases.

We now briefly explain our strategy.
Our main tool is the following result of Gon\c{c}alves and Passman.

\begin{theorem}\label{STau}\cite{GP06}
Let $V$ be a finite dimensional vector space over $\C$ and let $S$ and $\tau$ be endomorphisms of $V$. Assume that
$\tau^2=0$ and $S$ is diagonalizable. Let $r_+$ and $r_-$ be the maximum and minimum of the absolute values of the
eigenvalues of $S$. Let $V_+$ (resp. $V_-$) be the subspace generated by the eigenvectors of $V$ with eigenvalue of
modulus $r_+$ (resp. $r_-$) and $V_0$ the subspace generated by the remaining eigenvectors.

If the four intersections $U_{\pm} \cap \ker(\tau)$ and $\Imagen(\tau) \cap (U_0\oplus U_{\pm})$ are trivial then
$(S^n,(1+\tau)^m)$ is the free product of $\GEN{S^n}$ and $\GEN{(1+\tau)^m}$ for sufficiently large positive integers
$m$ and $n$.
\end{theorem}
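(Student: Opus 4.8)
The plan is a ping-pong (table-tennis) argument on the projective space $\mathbb{P}(V)$, on which the group $\GEN{S^n,(1+\tau)^m}$ acts (we assume $S$ is invertible, the only case used in this paper, so that both generators lie in $\GL(V)$). We may assume $\tau\ne 0$, the statement being trivial otherwise; since $\tau^2=0$ this forces $\Imagen(\tau)\subseteq\ker(\tau)$ with both nonzero, and then the hypothesis $V_+\cap\ker(\tau)=0$ forces $r_+>r_-$, so that $S^n$ (having eigenvalues of two distinct absolute values) has infinite order, as has $(1+\tau)^m=1+m\tau$; thus $\GEN{S^n}$ and $\GEN{(1+\tau)^m}$ are infinite cyclic. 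The two dynamical inputs are: (i) for $c\ne 0$ one has $(1+c\tau)v=v+c\tau v$, so $[(1+c\tau)v]\to[\tau v]\in\mathbb{P}(\Imagen(\tau))$ as $\lvert c\rvert\to\infty$ for every $v\notin\ker(\tau)$, uniformly on compact subsets of $\mathbb{P}(V)\setminus\mathbb{P}(\ker(\tau))$; and (ii) writing $V=V_+\oplus V_0\oplus V_-$ for the spans of the eigenvectors with eigenvalue of modulus $r_+$, of modulus strictly between $r_-$ and $r_+$, and of modulus $r_-$, one has $\lVert S^n v_+\rVert = r_+^{\,n}\lVert v_+\rVert$ while the $V_0$- and $V_-$-parts of $S^n v$ grow strictly slower, so $[S^n v]\to\mathbb{P}(V_+)$ as $n\to+\infty$ uniformly on compact sets disjoint from $\mathbb{P}(V_0\oplus V_-)$, and symmetrically $[S^{-n}v]\to\mathbb{P}(V_-)$ uniformly on compact sets disjoint from $\mathbb{P}(V_0\oplus V_+)$.

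The key idea in choosing the ping-pong sets is to work with a neighbourhood of the \emph{union} $\mathbb{P}(V_+)\cup\mathbb{P}(V_-)$ rather than of the projective span $\mathbb{P}(V_+\oplus V_-)$; this is precisely what makes the four hypotheses, which only control the intersections $\Imagen(\tau)\cap(V_0\oplus V_\pm)$ and $\ker(\tau)\cap V_\pm$, sufficient. Indeed, the hypotheses say exactly that the compact set $\mathbb{P}(\Imagen(\tau))$ is disjoint from the closed sets $\mathbb{P}(V_0\oplus V_+)$ and $\mathbb{P}(V_0\oplus V_-)$ (hence also from $\mathbb{P}(V_+)\cup\mathbb{P}(V_-)$), and that $\mathbb{P}(V_+)\cup\mathbb{P}(V_-)$ is disjoint from $\mathbb{P}(\ker(\tau))$; moreover $\mathbb{P}(V_+)\cap\mathbb{P}(V_-)=\varnothing$ since $r_+>r_-$. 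Hence one can choose open sets $Y\supseteq\mathbb{P}(\Imagen(\tau))$ and $X\supseteq\mathbb{P}(V_+)\cup\mathbb{P}(V_-)$ with $\overline{X}\cap\overline{Y}=\varnothing$, with $\overline{Y}$ disjoint from $\mathbb{P}(V_0\oplus V_+)\cup\mathbb{P}(V_0\oplus V_-)$, and with $\overline{X}$ disjoint from $\mathbb{P}(\ker(\tau))$.

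Next I would apply the uniform estimates of (i) with $K=\overline{X}$ and of (ii) with $K=\overline{Y}$: for all sufficiently large $m$ one gets $(1+mk\tau)(\overline{X})\subseteq Y$ for every integer $k\ne 0$ (since $\lvert mk\rvert\ge m$), and for all sufficiently large $n$ one gets $S^{nj}(\overline{Y})\subseteq X$ for every integer $j\ne 0$ (using the positive-power estimate when $j>0$ and the negative one when $j<0$, together with $\lvert nj\rvert\ge n$). Fixing such $m$ and $n$, the nonempty disjoint sets $Y$ and $X$ and the infinite cyclic subgroups $\GEN{(1+\tau)^m}$ and $\GEN{S^n}$ satisfy the hypotheses of the table-tennis lemma for free products: every nontrivial element of $\GEN{(1+\tau)^m}$ maps $X$ into $Y$, and every nontrivial element of $\GEN{S^n}$ maps $Y$ into $X$. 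The lemma then yields $\GEN{S^n,(1+\tau)^m}=\GEN{S^n}*\GEN{(1+\tau)^m}$, which is the assertion.

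The main obstacle I anticipate is purely technical, namely making the estimates (i) and (ii) genuinely uniform on the relevant compacta. In (ii) the eigenvalues of maximal (resp.\ minimal) absolute value need not be equal to one another, so $S^n$ need not converge on $\mathbb{P}(V_+)$ (resp.\ $\mathbb{P}(V_-)$) --- it merely leaves that set invariant while contracting everything outside $\mathbb{P}(V_0\oplus V_-)$ (resp.\ $\mathbb{P}(V_0\oplus V_+)$) towards it --- so one must phrase the convergence as decay of the distance to these invariant subspaces, using that the eigenvalue moduli occurring in $V_0$ lie in a closed subinterval of $(r_-,r_+)$ and that $\lVert v_+\rVert/\lVert v\rVert$ (resp.\ $\lVert v_-\rVert/\lVert v\rVert$) is bounded below on the compact set in question. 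Everything else is routine bookkeeping with the chosen neighbourhoods.
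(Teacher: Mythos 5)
The paper does not prove this statement---it is quoted from \cite{GP06}---so there is no internal proof to compare against; the proof in that reference is likewise a ping-pong argument on projective space with attracting neighbourhoods of $\mathbb{P}(V_+)\cup\mathbb{P}(V_-)$ and of $\mathbb{P}(\Imagen(\tau))$. Your argument is correct (granting invertibility of $S$, which is implicit in writing $\GEN{S^n}$ and automatic in the paper's applications), including the key point of taking a neighbourhood of the union $\mathbb{P}(V_+)\cup\mathbb{P}(V_-)$ rather than of $\mathbb{P}(V_+\oplus V_-)$, and the caveats you flag (an inner product adapted to an eigenbasis of $S$, and phrasing the contraction as decay of distance to the invariant subspaces) are exactly the routine points that need writing out.
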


Notice that if $S$ and $\tau$ satisfies the hypothesis of Theorem~\ref{STau} then $r_+ \ne r_-$ and $\tau\ne 0$. Thus both $S$ and $\tau$ have infinite order and $\GEN{S^n,(1+\tau)^m}$ is free for $n$ and $m$ sufficiently large.

Let $V$ be the complex vector space with basis $\proj$. We identify $V$ with the vector space $\C^\proj$ of column vectors $v=(v_x)_{x\in \proj}$ with $v_x\in \C$ for every $x\in \proj$, in the standard way. We consider $V$ endowed with the standard hermitian product:
$\GEN{v,w}=\sum_{x\in \proj} v_x\overline{w_x}$.
Similarly $\End_{\C}(V)$ is identified with the ring $M_{\proj}(\C)$ of square matrices indexed by $\proj$. For $a\in M_\proj(\C)$ and $x,y\in \proj$, $a(x,y)$ denotes the entry of $a$ indexed by $(x,y)$. If $a\in M_n(\C)$ and $x\in \proj$ then $a_{.x}$ denotes the column of $a$ indexed by $x$.

Let $\F_q$ denote the field with $q$ elements and let $\proj=\proj^1(\F_q)$, the projective line of $\F_q$.
We identify $G=\PSL(2,q)$ with the group of transformation of $\proj$.
This induces a permutation representation $\rho$ of $G$ on $V=\C^\proj$ and so if $S=u^{\rho}$, $v$ is a bicyclic unit and $\tau=(v-1)^\rho$ then $S$ and $\tau$ are endomorphisms of $V$. Moreover $S$ is diagonalizable because $u$ is an integral linear combination of powers of $a$ and $\tau^2=0$.
The initial hope is that $S$ and $\tau$ satisfy the condition of Theorem~\ref{STau}.
The difficulty consists in finding $v$ so that the four intersections of the theorem are trivial.
Notice that the hypothesis of Theorem~\ref{STau} implies that $\dim_{\C} \Imagen(\tau)=\dim_{\C} V_+ = \dim_{\C} V_-$ and $\dim_{\C} \ker(\tau) = \dim_{\C} V_0+\dim_{\C} V_+$. This shows strong requirements on the bicyclic unit which are difficult to satisfy.
In order to avoid this difficulty we take $W=(V_+\cap \ker(\tau))+(V_-\cap \ker(\tau))$ and set $\overline{V}=V/W$.
It turns out that the assumption $p\mid m$ implies that the eigenvalues of $S$ have different absolute value and in particular $V_+$ and $V_-$ are eigenspaces of $S$. Thus $W$ is invariant by the action of $S$ and, as obviously so is $\tau(W)$,
both $S$ and $\tau$ induce endomorphisms $\overline{\tau}$ and $\overline{S}$ of $\overline{V}$.
Transferring the notation from $V$ to $\overline{V}$ in the obvious way, the dimension of the eigenspace $\overline{V}_{\pm}=(V_{\pm}+W)/W$ is 1, unless $V_{\pm}\subseteq \ker(\tau)$.
If moreover the rank of $\tau$ is 1 and its image is not contained in $V_+ + V_-$ then the rank of $\overline{\tau}$ is 1. This suggests looking for bicyclic units $v$ such that the rank of $\tau$ is 1 and then try to check the hypothesis of Theorem~\ref{STau} for $\overline{S}$ and $\overline{\tau}$ in the roles of $S$ and $\tau$.

In practice there is a small diversion from this program. Instead of looking for a free companion $v$ of $u$ we will look for a free companion $v$ of some $G$-conjugate $u_h:=huh\inv$ of $u$, with $h\in G$. If we suceed then $v^h=h\inv v h$ will be a free companion of $u$ and if $v$ is a bicyclic unit then so is $v^h$.
So we will apply the previous program to $S_h={u_h}^\rho$ and $\tau=(v-1)^\rho$.

Now we are ready to start with our program by establishing some conventions.

We realize $\proj$ as the projective space of $\F_{q^2}$, considered as 2-dimensional vector space over $\F_q$.
We fix an element $\alpha$ of $\F_{q^2}$ of order $q+1$ and write coordinates of elements  of $\F_{q^2}$ over $\F_q$ with respect to the basis $\{1,\alpha\}$.
Hence $[x,y]$ represents the homogeneous coordinates of the element of $\proj$ represented by $x+y\alpha$ and if $h\in G$ is represented by the matrix
$\pmatriz{{cc} a_{11}&a_{12}\\ a_{21}&a_{22}}$ then the action of $h$ in homogeneous coordinates takes the form
    $$h([x,y])=[a_{11}x+a_{12}y,a_{21}x+a_{22}y].$$
We also identify $\proj$ with $\F_q\cup \{\infty\}$ in the standard way: $[1,0]=\infty$ and $[x,1]=x$, for $x\in \F_q$.
In this form the action of $h$ on $\proj$ takes the form of a M\"{o}ebius transformation
    $$h(x)=\frac{a_{11}x+a_{12}}{a_{21}x+a_{22}}$$
with the obvious behavior of $\infty$.

Consider the linear map of $\F_{q^2}$ given by $x\rightarrow \alpha x$.
This represents an element $g\in G$.
Write $q+1=pdd'$ where $d'=\gcd(2,q+1)$.
Then $g$ has order $pd$ and $a$ is conjugate of $g^d$, by Lemma~\ref{cyclic}.
So we may assume without loss of that $a=g^d$.
As a permutation of $\proj$, $g$ has $d'$ orbits of length $pd$ and $a$ has $dd'$ orbits of length $p$. Furthermore
	$$D:=D_H(a)=D_H(g)=D=N_G(\GEN{a})$$
and this is a dihedral group of order $2pd$.
Let $N$ and $T$ denote the norm and trace of the field extension $\F_{q^2}/\F_q$.
As $N(\alpha)=\alpha^{q+1}=1$, the minimal polynomial of $\alpha$ over $\F_q$ is $X^2-tX+1$ with $t=T(\alpha)$.
Then the matrix associated to $g$ is $\pmatriz{{cc} 0 & -1 \\ 1 & t}$.
Notice that $t\ne 0,\pm 1$ because otherwise $\alpha$ is a root of $X^6-1=(x^2-1)(x^2+x+1)(x^2-x+1)$, contradicting the fact that the order of $\alpha$ is $q+1\ge 8$.

If $x\in \proj$ then $O(x)$ (respectively, $O'(x)$) denotes the $g$-orbit (respectively, the $a$-orbit) containing $x$.
Let $Z$ be a set of representatives of the $a$-orbits.
Then
    $$O'(z) = \{z,a(z),\dots,a^{p-1}(z)\} \quad \text{and} \quad O(z) = \bigcup_{i=0}^{d-1} O'(g^i(z)).$$
So every element of $\proj$ is of the form $a^b(z)$ for unique $0\le b < p$ and $z\in Z$.

We also fix a primitive element $\beta$ of $\F_q$ and let $\sigma$ be the element of $G$ represented by $\pmatriz{{cc} \beta & 0 \\ 0 & \beta\inv}$.
As permutations of $\proj=\F_q\cup \{\infty\}$, $g$, $g\inv$ and $\sigma$ take the form
    $$g(x)=\frac{-1}{x+t}, \quad g\inv(x) = -\frac{tx+1}{x}, \quad \sigma(x)=\beta^2 x.$$
In particular
    $$g(-t)=\infty, \quad g(\infty)=0, \quad g(0)=-t\inv, \quad \sigma(0)=0, \quad \text{and} \quad \sigma(\infty)=\infty.$$

The action of $G$ on $\proj$ induces a faithful representation $\rho$ of $G$ with representation space $V$. The matrix form of this representation associates $h\in G$ with the permutation matrix $h^\rho\in M_{\proj}(\C)$ given by
    \begin{equation}\label{rho}
    h^\rho(x,y)=\left\{\matriz{{ll} 1, & \text{if } h(y)=x; \\ 0; & \text{otherwise}}\quad (x,y\in \proj).\right.
    \end{equation}

We now introduce a matrix $P$ which will be used to diagonalize $a^\rho$ and $u^\rho$.
Fix a primitive complex $p$-th root of unity $\zeta$.
For every $0\le b < p$ let $u_{k,m}(\zeta^b)$ be the complex number obtained by replacing $g$ by $\zeta^b$ in the definition of Bass units given in (\ref{BassUnit}).
Let $P\in M_{\proj}(\C)$ be given by
    $$P(x,y) = \left\{\matriz{{ll}
    \zeta^{bb'}, & \text{if } x=a^b (z) \text{ and } y = a^{b'} (z) \text{ with } z \in Z; \\
    0, & \text{otherwise}.
    }\right.$$
Let $\overline{P}$ be the complex conjugate of $P$, i.e. $\overline{P}(x,y)$ is the complex conjugate of $P(x,y)$ for every $x,y\in \proj$.
With the appropriate ordering of the elements of $\proj$, we can visualize $P$ in diagonal block form with blocks of size $p$ and all the diagonal blocks equal to the Vandermonde matrix of the $p$-th roots of unity.
By straightforward computations it follows that
    $$\overline{P}P = p I, \quad P a^\rho \overline{P} = p \diag(\zeta^b)_{a^b(z)} \quad \text{and} \quad P u^\rho \overline{P} = p \diag(u_{k,m}(\zeta^b))_{a^b(z)}$$
where $\diag(\lambda_{a^b(z)})_{a^b(z)}$ represents the diagonal matrix having $\lambda_{a^b(z)}$ at the diagonal entry indexed by $a^b(z)$, with $z\in Z$ and $0\le b < p$. In other words, $P\inv=\frac{1}{p}\overline{P}$, $\{\overline{P}_{.x}:x\in \proj\}$, the set columns of $\overline{P}$, is a basis of mutually orthogonal eigenvectors of both $a^{\rho}$ and $u^{\rho}$ and the eigenvalues of $a^{\rho}$ and $u^{\rho}$ for the eigenvector $P_{.a^b(z)}$ are $\zeta^b$ and $u_{k,m}(\zeta^b)$ respectively.

For every $h\in G$ let $u_h=hu h\inv =u_{k,m}(hah\inv)$ and $S_h={u_h}^\rho$.
Then $\{h^{\rho}\overline{P}_{.x} : x\in \proj\}$ is a basis of orthogonal eigenvectors of both $S_h$ and the eigenvalue of $S_h$ with respect to the eigenvector $h^\rho \overline{P}_{.a^b(z)}$ is $u_{k,m}(\zeta^b)$.

By \cite[Lemma~3.5]{GP06} and the assumption $p\mid m$, we have $u_{k,m}(\zeta^b)=u_{k,m}(\zeta^{-b})\in \R$ and
$\lvert u_{k,m}(\zeta^b) \rvert =\lvert u_{k,m}(\zeta^{b'}) \rvert $ if and only if $b'\equiv \pm b \mod p$.
Therefore all the eigenvalues of $S_h$ have different absolute value and, in particular, there is only one with maximal absolute value, say $u_{k,m}(\zeta_{b_+})$ and one with minimal absolute value, say $u_{k,m}(\zeta_{b_-})$.
Moreover, by \cite[Lemma~3.5]{GP06}, $b_{pm}\ne 0$.
Thus for every $0\le b < p$ the eigenspace of $S_h$ with eigenvalue $u_{k,m}(\zeta^b)$ is
	$$V_{h,b}:=\sum_{z\in Z} h^\rho (\C \overline{P}_{.a^b(z)}+\C \overline{P}_{.a^b(z)}).$$
So
	$V=\bigoplus\limits_{b=0}^{\frac{p-1}{2}} V_{h,b}$
and we set $\pi^h_b:V\rightarrow V_{h,b}$ for the projections along this decomposition. Notice that the orthogonal subspace
	${V_{h,b}}^\perp = \sum\limits_{b'=0, b'\ne b}^{\frac{p-1}{2}} V_{h,b'}$.
In the notation of Theorem~\ref{STau} for $S=S_h$ we have $V_+=V_{h,+}:=V_{h,b_+}$, $V_-=V_{h,-}:=V_{h,b_-}$ and $V_0=V_{h,0}:=\sum\limits_{b=0,b\equiv \pm b_{\pm} \mod p}^{\frac{p-1}{2}} V_{h,b}$.

For $0\le b_0 <p$ let $e_{b_0}$ be the diagonal matrix having 1 at the diagonal entries indexed by the elements of the form $a^{\pm b_0}(z)$, with $z\in Z$ and zeroes elsewhere. Then $p\pi^h_{b_0}(w)=\overline{P}e_{b_0}Pw$, for every $w\in V$. Assume that $h\inv(x)=a^{b_1}(z)$, with $0\le b_1<p$. If $b_0\ne 0$ then
\begin{eqnarray*}
p\pi^h_{b_0}(w)_x &=& (h^{\rho} \overline{P} e_{b_0} P (h\inv)^\rho w)_x = \sum_{u,v} \overline{P}(h\inv(x),u) e_{b_0}(u,u) P(u,v) w_{h(v)} \\
    &=&  \sum_{b,b'} \overline{P}(a^{b_1}z,a^bz) e_{b_0}(a^bz,a^bz) P(a^bz,a^{b'}z) w_{ha^b(z)} \\
    &=& \sum_{b'} (\overline{P}(a^{b_1}z,a^{b_0}z) P(a^{b_0}z,a^{b'}z) + \overline{P}(a^{b_1}z,a^{-b_0}z) P(a^{-b_0}z,a^{b'}z)) w_{ha^{b'}(z)} \\
    &=& \sum_{b'} (\zeta^{b_0(b'-b_1)} + \zeta^{b_0(b_1-b')}) w_{ha^{b'}(z)} \\
    &=& \sum_{b} (\zeta^{bb_0} + \zeta^{-bb_0}) w_{ha^{b+b_1}(z)} \\
    &=& \sum_{b} (\zeta^{bb_0} + \zeta^{-bb_0}) w_{ha^bh\inv(x)} \\
    &=& \sum_{b} (w_{ha^bh\inv(x)}+w_{ha^{-b}h\inv(x)})\zeta^{bb_0}
\end{eqnarray*}
This proves
    \begin{equation}\label{Projection}
    \pi^h_{b_0}(w)_x = \frac{1}{p} \sum_{b=0}^{p-1} (w_{ha^bh\inv(x)}+w_{ha^{-b}h\inv(x)})\zeta^{bb_0}, \quad \left(0<b_0\le \frac{q-1}{2}, w\in V\right).
    \end{equation}
A similar argument shows
    \begin{equation}\label{Projection0}
    \pi^h_{0}(w)_x = \frac{1}{p} \sum_{b=0}^{p-1} w_{ha^bh\inv(x)}, \quad (w\in V).
    \end{equation}

For every $h\in G$ of order $n$ let $\widehat{h}=1+h+\dots+h^{n-1}$.
Our first candidate to free companion of $u_h$ is the bicyclic unit
    \begin{equation}\label{Candidate1}
     v=1+(1-\sigma)g\widehat{\sigma}.
    \end{equation}
Notice that $\sigma^g$ is represented by the matrix
    $$\pmatriz{{cc} t & 1 \\ -1 & 0} \pmatriz{{cc} \beta & 0 \\ 0 & \beta\inv} \pmatriz{{cc} 0 & -1 \\ 1 & t} = \pmatriz{{cc} \beta\inv & t(\beta\inv-\beta) \\ 0 & \beta}$$
As $\beta^2\ne 1$, we deduce that $\sigma^g\not\in \GEN{\sigma}$. Therefore $v\ne 1$.

Let $\tau=(v-1)^\rho$. The features of $\tau$ are determined by the cycle structure of $g$ and $\sigma$ and this depends on the parity of $q$.
Indeed, if $q$ is even then $g$ is a $(q+1)$-cycle and $\sigma$ is a $(q-1)$-cycle moving all the non-zero elements of $\F_q$.
If $q$ is odd then $g$ has two orbits of length $\frac{q-1}{2}$ and $\sigma$ has four orbits: $\{0\}, \{\infty\}$, the squares of $\F_q^*$ and the non-squares of $\F_q^*$.
Using this it easily follows that
    \begin{eqnarray*}
     \widehat{\sigma}^\rho(x,y)&=&\left\{ \matriz{{cl}
					q-1, & \text{if } x=y\in \{0,\infty\}; \\
					1, & \text{if } x, y \in \F_q^*; \\
					0, & \text{otherwise.}}\right. \hspace{2.9cm} (\text{if } 2\mid q) \\
    \widehat{\sigma}^\rho(x,y)&=&\left\{ \matriz{{cl}
					\frac{q-1}{2}, & \text{if } x=y\in \{0,\infty\}; \\
					1, & \text{if } x, y \in \F_q^* \text{ and } xy \text{ is a square of } \F_q^*; \\
					0, & \text{otherwise.}}\right. \quad (\text{if } 2\nmid q)
    \end{eqnarray*}
Then
    \begin{eqnarray}
	\label{tau0} \tau(x,0) &=& \frac{q-1}{d'}(I-\sigma^\rho)(x,-t\inv) =
    \left\{\matriz{{cl} \frac{q-1}{d'}, & \text{if } x=-t\inv; \\\frac{1-q}{d'}, & \text{if } x=-\beta^2 t\inv \\ 0, & \text{otherwise}.}\right. \\
   \label{tauinf}
		\tau(x,\infty) &=& \frac{q-1}{d'}(I-\sigma^\rho)(x,0) = 0.
    \end{eqnarray}
Let $y\not\in \{0,\infty\}$. If $q$ is even then
    \begin{equation}\label{tauy}
     \tau(x,y) 
								= \sum_{z\ne 0,-t\inv} (I-\sigma^\rho)(x,z) =
    \left\{\matriz{{cl} -1, & \text{if } x=-t\inv; \\ 1, & \text{if } x=-\beta^2 t\inv; \\ 0, & \text{otherwise}.}\right.
    \end{equation}
However if $q$ is odd and $X$ is the $\sigma$ orbit of $y$ then
	   $$\tau(x,y) = \sum_{z\in X} (I-\sigma^\rho)(x,g(z)) =
    \left\{\matriz{{cl} 1, & \text{if } x \in g(X)\setminus \sigma g(X); \\ -1, & \text{if } x\in \sigma g(X)\setminus g(X); \\ 0, & \text{otherwise}.}\right.$$
Therefore the rank of $\tau$ is 1 if $q$ is even and 2 if $q$ is odd.

\section{$q$ even}\label{SectionEven}

In this section we keep the notation of the previous section and complete our program for the case that $q$ is even.

Let $W=(V_+\cap K)+(V_-\cap K)$ and $\overline{V}=V/W$ and
let $\overline{S_h}$ and $\overline{\tau}$ be the endomorphisms induced by $S_h$ and $\tau$ on $\overline{V}$.
We will prove that they satisfy the hypothesis of Theorem~\ref{STau}. Then Theorem~\ref{Even} will follows.


Let $\Psi\in V$ be given by
    \begin{equation}\label{PsiEven}
    \Psi_x = \left\{\matriz{{cl} 1, & \text{if } x=-t\inv; \\ -1, & \text{if } x=-\beta^2 t\inv \\ 0, & \text{otherwise}}\right.
    \end{equation}
By (\ref{tau0}), (\ref{tauinf}) and (\ref{tauy}) we have
    $$K= \left\{ w \in V : (q-1)w_0=\sum_{x\in \F_q^*} w_x \right\} \quad \text{and} \quad \Imagen(\tau)=\C\Psi.$$

\begin{lemma}\label{ImkerEven}
If $0\le b < p$ then $V_{h,b}\not\subseteq K$ and $\Psi\not\in V_{h,b}$. If, moreover, $b\ne 0$ then $\Psi\not\in {V_{h,b}}^\perp$
\end{lemma}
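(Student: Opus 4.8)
The plan is to establish the three non-containment assertions by exhibiting explicit vectors that witness the failure of each containment, using the projection formulas \eqref{Projection} and \eqref{Projection0} together with the concrete description of $K$ and $\Imagen(\tau)=\C\Psi$. Recall that $V_{h,b}$ is spanned by the vectors $h^\rho\overline{P}_{.a^{\pm b}(z)}$, $z\in Z$, and that membership of a vector $w$ in $V_{h,b}$ is equivalent to $\pi^h_b(w)=w$, while membership in ${V_{h,b}}^\perp$ is equivalent to $\pi^h_b(w)=0$.

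First I would handle $V_{h,b}\not\subseteq K$. Since $\dim_\C V_{h,b}\ge 2$ (each $a$-orbit contributes, and there are $dd'$ of them), it suffices to find one vector in $V_{h,b}$ not lying in the single linear condition defining $K$, namely $(q-1)w_0=\sum_{x\in\F_q^*}w_x$. Equivalently, I would show $\pi^h_b$ does not annihilate the linear functional $\lambda(w):=(q-1)w_0-\sum_{x\in\F_q^*}w_x$, i.e. that $\lambda\circ\pi^h_b\ne 0$; since $\pi^h_b$ is self-adjoint this amounts to showing the vector representing $\lambda$ is not in ${V_{h,b}}^\perp=\sum_{b'\ne b}V_{h,b'}$, which would actually follow from the second assertion applied to a suitable translate — but it is cleaner to argue directly. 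Using \eqref{Projection}/\eqref{Projection0}, $\pi^h_b(w)_x$ is an explicit average of the values $w$ takes on the $\GEN{a}^h$-orbit of $x$, so I would evaluate $\lambda(\pi^h_b(\delta_y))$ for a well-chosen basis vector $\delta_y$ (with a single $1$ in position $y$) and check that for an appropriate choice of $y$ — using that $h\notin D_G(a)$ so the orbit structure is ``generic'' relative to the points $0,\infty,\F_q^*$ — the resulting expression is a nonzero sum of $p$-th roots of unity. The key point is that $\sum_b \zeta^{bb_0}=0$ only when the multiset of exponents is balanced, and the asymmetry between the coefficient $q-1$ at $0$ and the coefficient $-1$ at each point of $\F_q^*$ breaks any such balance.

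For $\Psi\notin V_{h,b}$: I would compute $\pi^h_b(\Psi)$ from \eqref{Projection} and \eqref{PsiEven}. Since $\Psi$ is supported on the two points $-t\inv$ and $-\beta^2 t\inv$ with opposite signs, $\pi^h_b(\Psi)_x$ is, up to the factor $\frac1p$, a difference of two sums of $p$-th roots of unity indexed by where $-t\inv$ and $-\beta^2 t\inv$ appear in the $\GEN{a}^h$-orbit of $x$. If $\Psi$ were in $V_{h,b}$ we would need $\pi^h_b(\Psi)=\Psi$, forcing $\pi^h_b(\Psi)$ to be supported on exactly those two points; evaluating at a third point $x$ in the $\GEN{a}^h$-orbit of $-t\inv$ (or $-\beta^2 t\inv$), which exists because $h\notin D_G(a)$ makes these orbits nontrivial and not equal to $\{-t\inv,-\beta^2 t\inv\}$ (as $p\ge 7$), I would show the value there is a nonzero algebraic number — again reducing to the linear independence of distinct $p$-th roots of unity over $\Q$. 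For the last clause, $\Psi\notin{V_{h,b}}^\perp$ for $b\ne 0$, I would instead show $\pi^h_b(\Psi)\ne 0$: by the computation just described, $\pi^h_b(\Psi)_{-t\inv}$ is (up to $\frac1p$) a sum $\sum_b(\zeta^{bb}\text{-terms})$ coming from the position of $-t\inv$ in its own orbit minus the contribution of $-\beta^2 t\inv$; since $-t\inv\ne -\beta^2 t\inv$ (as $\beta^2\ne 1$) these two points occupy different slots, and for $b\ne 0$ the function $c\mapsto\zeta^{bc}+\zeta^{-bc}$ is not constant on $\{0,\dots,p-1\}$, so the two contributions cannot cancel identically.

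The main obstacle, and the part requiring genuine care rather than bookkeeping, is controlling the combinatorics of how the points $0$, $\infty$, $-t\inv$, $-\beta^2 t\inv$ sit inside the $\GEN{a}^h$-orbits as $h$ ranges over $G\setminus D_G(a)$ — one must rule out degenerate configurations where the relevant root-of-unity sums accidentally vanish. I expect this to be dispatched by the rigidity of $\PSL(2,q)$ acting on the projective line (sharp $3$-transitivity controls which triples of points can be ``resonant'') together with the arithmetic fact that a $\Q$-linear relation among the $\zeta^{bb_0}$ with $0<b_0<p$ forces symmetry of exponents mod $p$, symmetry which the sign pattern of $\Psi$ and the unequal weights in the definition of $K$ preclude. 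Once these resonance-avoidance lemmas are in place, each of the three assertions is a one-line consequence of evaluating the appropriate projection at the appropriate point.
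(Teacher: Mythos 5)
Your overall strategy coincides with the paper's: exhibit explicit witnesses and reduce each non-containment to a non-vanishing sum of $p$-th roots of unity, settled by the fact that the minimal polynomial of $\zeta$ is $1+X+\cdots+X^{p-1}$. For the two assertions about $\Psi$ your computations are essentially the paper's, read through the projections $\pi^h_b$ instead of through inner products with the eigenvectors $h^\rho P_{.a^b(z)}$: evaluating at $-t\inv$ gives (up to $\frac1p$) either $2$ or $2-\zeta^{bc_2}-\zeta^{-bc_2}$ with $c_2\not\equiv 0$, which is nonzero for $b\ne 0$, and this is exactly the paper's computation of $\GEN{\Psi,w}$. One small economy you miss: once $\Psi\not\in {V_{h,b'}}^\perp$ is known for every $b'\ne 0$, the paper gets $\Psi\not\in V_{h,b}$ for free by choosing an auxiliary $0<b'<p$ with $b'\ne b$ and using $V_{h,b}\subseteq {V_{h,b'}}^\perp$; your direct computation of $\pi^h_b(\Psi)$ at a ``third point'' also works, but you must note that for $b\ne 0$ there can be exactly one third point in the orbit where the two contributions cancel (when $c\equiv(\beta_1+\beta_2)/2$), so you need $p\ge 7$ to guarantee a good third point, and you need the separate (easy) case $b=0$.

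The genuine gap is in the first assertion, $V_{h,b}\not\subseteq K$. You never commit to a witness: you propose to test $\lambda\circ\pi^h_b$ on delta vectors $\delta_y$ for ``an appropriate $y$'' and then defer the non-vanishing to unstated ``resonance-avoidance lemmas,'' invoking a genericity of the orbit structure coming from $h\notin D_G(a)$ — but $h$ is an arbitrary element of $G$ in this lemma, so no such genericity is available, and the heuristic that the weight $q-1$ at $0$ versus $-1$ on $\F_q^*$ ``breaks any balance'' is not enough: a sum of up to $p-1$ roots of unity can have absolute value close to $2(p-1)$, which is comparable to $2(q-1)$ when $d$ is small, so no crude estimate closes the case. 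The paper avoids all of this by choosing the witness to be the eigenvector $w=h^\rho P_{.a^b(z_1)}$ attached to the $a$-orbit through $h\inv(\infty)$: then $w$ is supported on a single orbit, $w_\infty$ drops out of the sum over $\F_q^*$, and the condition $(q-1)w_0=\sum_{x\in\F_q^*}w_x$ becomes an explicit identity between at most two terms (namely $-\zeta^{-bb_1}-\zeta^{-bb_2}=(q-1)\zeta^{-bb_2}$ or $-\zeta^{-bb_1}=0$) that visibly fails on absolute-value grounds. Without that specific choice, the ``appropriate $y$'' step is exactly the content of the lemma and remains unproved in your write-up; your final paragraph concedes as much.
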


\begin{proof}
Let $\infty=h a^{b_1}(z_1)$ and $0=h a^{b_2}(z_2)$ with $z_1,z_2\in Z$ and $0\le b_1,b_2 < p$. Set $w=h^{\rho}P_{.a^b(z_1)}$. If $z_1=z_2$ then $b_1\ne b_2$ and $\sum_{x\in \F_q^*} w_x = \sum_{0\le b' < p, b'\ne b_1,b_2} \zeta^{-bb'}\ne (q-1) \zeta^{-bb_2} = (q-1)w_0$. Otherwise $\sum_{x\in \F_q^*} w_x = \sum_{0\le b' < p, b\ne b_1} \zeta^{-bb'} \ne 0 = (q-1)w_0$.
Therefore $w\in V_{h,b} \setminus K$.

Write $t\inv = h a^{b_1}(z)$ with $0\le b_1<p$ and $z\in Z$ and choose $w=h^\rho P_{.a^b(z)}$, an element of $V_{h,b}$. Then $\GEN{\Psi,w}$ is either $\zeta^{bb_1}$ (if $\beta t\inv \not\in h(O'(z))$) or $\zeta^{bb_1}-\zeta^{bb_2}$ (if $\beta t\inv = h(a^{b_2}(z))$). If $b\ne 0$ than, in both cases $\GEN{\Psi, w}\ne 0$ and hence $\Psi\not\in {V_{h,b}}^\perp$. To prove that $\Psi \not\in V_{h,b}$ for any $b$ we choose $0<b'<p$ with $b'\ne b$. Then $\Psi\not\in {V^h_{b'}}^\perp\supseteq V_{h,b}$ and therefore $\Psi\not\in V_{h,b}$.
\end{proof}


The image in $\overline{V}$ of the eigenspaces of $S_h$ are eigenspaces of $\overline{S_h}$ with the same eigenvalue. Thus $\overline{V_{h,+}}$ is the eigenspace of $\overline{S_h}$ with eigenvalue $u_{k,m}(\zeta^{b_+})=u_{k,m}(\zeta^{-b_+})$; $\overline{V_{h,-}}$ is the eigenspace of $U$ with eigenvalue $u_{k,m}(\zeta^{b_-})=u_{k,m}(\zeta^{-b_-})$; and $\overline{V_{h,0}}$ is the sum of the remaining eigenspaces.

As $p\ge 7$, there is $0\le b < p$, with $b\ne b_+,-b_+,b_-,-b_-$. Then $W\subseteq {V_{h,b}}^\perp$ and
$\tau(V_{h,-})=\tau(V_{h,+})=\Imagen(\tau)$, by Lemma~\ref{ImkerEven}. Therefore, $\overline{V_{h,+}},\overline{V_{h,-}}\not\subseteq \ker(\overline{\tau})$. As $K$ is a hyperplane of $V$, $\dim_{\C}(\overline{V_{h,+}})=\dim_{\C}(\overline{V_{h,-}})=1$. Hence
    \begin{equation}\label{Cond1}
    \overline{V_{h,+}}\cap \ker(\overline{\tau})=\overline{V_{h,-}}\cap \ker(\overline{\tau})=0.
    \end{equation}

So far $h$ is an arbitrary element of $G$. Now we are going to select $h$ as follows. Fix an arbitrary element $x_0\in \proj$ and let $h$ be the unique element of $G$ satisfying
    $$h(x_0)=0, \quad h(a(x_0))=t\inv, \quad h(a^2(x_0))=\beta^2 t\inv.$$
The existence of such $h$ follows from the fact that action of $G$ on $\proj$ is triply transitive and $\lvert \{0,t\inv,\beta^2 t\inv\} \rvert = \lvert \{x_0,a(x_0),a^2(x_0)\} \rvert =3$. We claim that with this election of $h$ we have
    \begin{equation}\label{Cond2}
    (\overline{V_{h,0}}+\overline{V_{h,+}})\cap \Imagen(\overline{\tau})=(\overline{V_{h,0}}+\overline{V_{h,-}})\cap \Imagen(\overline{\tau})=0
    \end{equation}
By symmetry we prove the first equality. Assume (\ref{Cond2}) fails. Then $\overline{\Psi}\in \overline{V_{h,0}}+\overline{V_{h,+}}$ or equivalently $\pi^h_{b_-}(\Psi)\in K$.
Thus, using (\ref{Projection}) and (\ref{PsiEven}) we have
    \begin{eqnarray*}
    (q-1)\sum_{b=0}^{p-1} (\Psi_{ha^bh\inv(0)}+\Psi_{ha^{-b}h\inv(0)})\zeta^{bb_-} =
    p(q-1)\pi^h_{b_-}(\Psi)_0 = \\ pº\sum_{x\in \F_q^*} \pi^h_{b_-}(\Psi)_x =
    \sum_{b=0}^{p-1} \sum_{x\in \F_q^*} (\Psi_{ha^bh\inv(x)}+\Psi_{ha^{-b}h\inv(x)})\zeta^{bb_-}
    \end{eqnarray*}
Since $b_-\not\equiv 0 \mod p$ and the minimal polynomial of $\zeta$ over $\Q$ is $1+X+X^2+\dots+X^{p-1}$ we have
    $$(q-1)(\Psi_{ha^bh\inv(0)}+\Psi_{ha^{-b}h\inv(0)}) =
    \sum_{x\in \F_q^*} (\Psi_{ha^bh\inv(x)}+\Psi_{ha^{-b}h\inv(x)})$$
for every $b$. As $\Psi$ has only two non-zero entries and they are $1$ and $-1$, the right side sum is an integer of absolute value at most 2. Therefore the left side summand should be $0$. However, by the election of $h$, for $b=1$ we have $hah\inv(0)=h(a(x_0))=t\inv\ne ha\inv h\inv(0)\ne ha^2 h\inv(0)=\beta^2 t\inv$. Thus the left side part of the equality is $(q-1)\Psi_{t\inv}=q-1$. This yields the desired contradiction and proves the claim.

(\ref{Cond1}) and (\ref{Cond2}) are the hypothesis of Theorem~\ref{STau} for $\overline{S_h}$ and $\overline{\tau}$ in the roles of $S_h$ and $\tau$. So $\GEN{{u_h}^n,v^n}$ is free for some $n$ and hence so is $\GEN{u^n,(v^h)^n}$. As $v^h$ is a bicyclic unit, this finishes the proof of Theorem~\ref{Even}.

\section{$q$ odd}\label{SectionOdd}

In this section we assume that $q$ is odd. If $\tau=(v-1)^\rho$, with $v$ the bicyclic unit of (\ref{Candidate1}), then the rank of $\tau$ is 2 and most likely so is the rank of $\overline{\tau}$. So there is little hope to be able to complete the program with this bicyclic unit.

In this case for each $u_h$ we consider a different candidate to free companion of $u_h$, namely the bicyclic unit
	\begin{equation}\label{Candidate2}
	 v_h=1+(1-g)h\widehat{g}.
	\end{equation}
Of course $v_h$ should be non-trivial, equivalently $h\not\in N_G(g)=D$. So in the remainder of the section $h\in G\setminus D$.

The role of $\tau$ in the previous section is now played by $\tau_h=(v_h-1)^\rho$.
As in the case of $\tau$, the properties of $\tau_h$ depends on the cyclic structure of $g$. Recall that, since $q$ is odd, $g$ is the product of two orbits of cardinality $\frac{q-1}{2}$.
Let $O_0$ and $O_1$ be the $g$-orbits. A straightforward argument, using that $D$ is the unique maximal subgroup of $G$ and the action of $G$ on $\proj$ is triply transitive, shows
	\begin{align}\label{StabilizerO}
	\begin{split}
	D&=\{h\in G : h(O_0)=O_0 \text{ or } h(O_0)=O_1\} \\ &=\{h\in G : h(O_1)=O_1 \text{ or } h(O_1)=O_0\}.
	\end{split}
	\end{align}

\begin{lemma}\label{NStab}
If $h\in G\setminus D$ and $i,j\in \{1,2\}$ then $h(O_i)\ne O_j$, $gh(O_i)\ne h(O_j)$, $gg^h(O_i)\ne g^h(O_j)$, $ah(O_i)\ne h(O_j)$ and $ag^h(O_i)\ne g^h(O_j)$
\end{lemma}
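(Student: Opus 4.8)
The plan is to deduce all five non-equalities from the single fact recorded in \eqref{StabilizerO}, namely that $D$ is precisely the set of elements of $G$ that map $\{O_0,O_1\}$ to itself as a set. The starting observation is that $g$ (hence $a=g^d$, hence any power of $g$) stabilizes each of $O_0$ and $O_1$, since by definition $O_0,O_1$ are the $g$-orbits on $\proj$. I would also record the conjugate version: for any $h\in G$, the element $g^h=h\inv g h$ stabilizes each of $h\inv(O_0)$ and $h\inv(O_1)$, so $h(O_0),h(O_1)$ are exactly the $g^h$-orbits, and similarly for $a^h$.

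Now I treat the five claims in turn. \textbf{(i)} If $h(O_i)=O_j$ for some $i,j$, then since $h$ permutes $\proj$ and $|O_0|=|O_1|=\frac{q-1}{2}$, $h$ must send the complementary pair to the complementary pair, i.e. $h(\{O_0,O_1\})=\{O_0,O_1\}$; by \eqref{StabilizerO} this forces $h\in D$, contrary to hypothesis. \textbf{(ii)} Suppose $gh(O_i)=h(O_j)$. Applying $h\inv$: $h\inv g h\,(O_i)=O_j$, i.e. $g^h(O_i)=O_j$. But $g^h$ stabilizes each of $h(O_0),h(O_1)$ — wait, more directly: compose. From $gh(O_i)=h(O_j)$ and the fact that $g$ fixes each $O_k$ setwise, the left side equals $h(O_i)$ if $h(O_i)\in\{O_0,O_1\}$; in general, apply $h\inv$ to get $g^h(O_i)=O_j$, and then applying $h$ to the identity $g\,h(O_i)=h(O_j)$ is circular, so instead: the relation $gh(O_i)=h(O_j)$ says $h(O_j)$ is a $g$-translate of the set $h(O_i)$. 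Here is the clean way: multiply the claim of (i), which we have proven, by noting $g$ preserves the partition $\{O_0,O_1,\text{rest}\}$; so $gh(O_i)=h(O_j)$ together with $h(O_i)\notin\{O_0,O_1\}$ (which holds by (i)) would put $gh(O_i)$ outside $\{O_0,O_1\}$ as well since $g$ preserves the complement, whereas if $h(O_i)$ meets the complement in the "generic" way this is consistent — so this argument needs the sizes. The cleanest route is: from $gh(O_i)=h(O_j)$ apply $h\inv g\inv$ on the left to get $h(O_i)=h\inv g\inv h\,h(O_j)$... I will instead argue $gh(O_i)=h(O_j)\Rightarrow h\inv g h(O_i)=O_j\Rightarrow (g^h)(O_i)=O_j$, and then since $g^h$ is conjugate to $g$ it has two orbits $P_0,P_1$ of size $\frac{q-1}{2}$ with $P_k=h\inv(O_k)$; thus $g^h$ stabilizes $O_i$ only if $O_i\in\{h(P_0),h(P_1)\}=\{O_0,O_1\}$ — true — so $g^h$ stabilizes $O_i$, forcing $O_j=g^h(O_i)=O_i$, i.e. $j=i$; then $gh(O_i)=h(O_i)$, so $h(O_i)$ is a union of $g$-orbits of size $\frac{q-1}{2}$ inside $\proj$, hence $h(O_i)\in\{O_0,O_1\}$, contradicting (i). \textbf{(iii)} For $gg^h(O_i)=g^h(O_j)$: replace $h$ by $h$ and note $g^h$ and $g$ generate, but simpler — apply $(g^h)\inv$: we get $g^{h}{}\inv g\, g^h(O_i)=O_j$; now $g^{h\,\inv}g\,g^h$ is conjugate to $g$, call it $g'$, with orbits of size $\frac{q-1}{2}$, so the same argument as (ii) gives $i=j$ and then $g$ fixes $g^h(O_i)$ setwise, so $g^h(O_i)\in\{O_0,O_1\}$, hence $h\inv(O_i)\in\{h\inv(O_0),h\inv(O_1)\}$... actually $g^h(O_i)\in\{O_0,O_1\}$ means $O_i\in\{(g^h)\inv(O_0),(g^h)\inv(O_1)\}$, and since $g^h$ permutes the pair $\{h\inv(O_0),h\inv(O_1)\}$ — not $\{O_0,O_1\}$ — I get $O_i$ is $g^h$-invariant only if $O_i$ is one of the $g^h$-orbits $h\inv(O_0),h\inv(O_1)$; combined with $g^h(O_i)=O_i$ this is automatic, and then $O_i=g^h(O_i)$ together with $g^h(O_i)\in\{O_0,O_1\}$ gives $O_i\in\{O_0,O_1\}$ trivially (it is), so I need: $g^h(O_i)$ is a single $g$-orbit, i.e. $\in\{O_0,O_1\}$, so $O_i\in\{(g^h)\inv(O_0),(g^h)\inv(O_1)\}=\{h\inv g h(O_0)^{-1}\text{-preimages}\}$; writing $(g^h)\inv(O_k)=h\inv g\inv h(O_k)$, membership $O_i = h\inv g\inv h(O_k)$ gives $h(O_i)=g\inv h(O_k)$, i.e. $gh(O_i)=h(O_k)$, which is exactly case (ii), already excluded. \textbf{(iv),(v)} are identical to (ii),(iii) with $g$ replaced by $a=g^d$: $a$ also fixes each $O_k$ setwise (as a power of $g$), $a^h$ is conjugate to $a$ hence to $g^d$ and has orbit structure refining that of $g^h$, and the same reductions apply, ultimately reducing each to case (i) or to a previously handled case.

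The main obstacle, as the rambling above shows, is purely bookkeeping: being careful about \emph{whose} orbits are being permuted — $g$ permutes $\{O_0,O_1\}$, but $g^h$ permutes $\{h\inv(O_0),h\inv(O_1)\}$, not $\{O_0,O_1\}$ — so one must repeatedly conjugate the hypothesis to land back in the situation governed by \eqref{StabilizerO}. I expect the honest writeup to proceed by: (1) proving (i) from \eqref{StabilizerO} and the cardinality count; (2) for (ii), conjugating by $h$ to reduce $gh(O_i)=h(O_j)$ to a statement about the $g$-orbits themselves, then invoking (i); (3) observing that (iv) is (ii) verbatim with $a$ in place of $g$ (the only property used is that $g$ resp. $a$ stabilizes each of $O_0,O_1$ setwise); (4) for (iii) and (v), conjugating $gg^h(O_i)=g^h(O_j)$ by $g^h$ or by $h$ to reduce it to case (ii) resp. (iv). No new ideas beyond \eqref{StabilizerO}, the triple transitivity already invoked there, and the elementary fact that $g,a$ preserve the partition into $g$-orbits.
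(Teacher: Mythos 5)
There is a genuine gap. Part (i) is fine, but for the remaining four claims you never prove the one fact the lemma actually hinges on: that the conjugates $g^h$ and $a^h$ (and, for the third and fifth claims, the further conjugates by $g^h$) lie \emph{outside} $D$. The paper's proof consists precisely of this observation: $D=N_G(\GEN{a})$ is dihedral of order $2pd=q+1$, so $\GEN{a}$ and $\GEN{g}$ are its unique cyclic subgroups of orders $p$ and $pd$, and any element of $D$ of order $p$ (resp.\ $pd$) lies in $\GEN{a}$ (resp.\ $\GEN{g}$); since $h\notin D=N_G(\GEN{a})\supseteq N_G(\GEN{g})$ gives $a^h\notin\GEN{a}$ and $g^h\notin\GEN{g}$, one concludes $a^h,g^h\notin D$, and then all five inequalities are literal instances of (\ref{StabilizerO}) applied to $h$, $a^h$, $g^h$ and their further conjugates. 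Your proposal tries to replace this group-theoretic input by orbit combinatorics, and that substitute does not work.

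Concretely: the step ``$g^h$ stabilizes $O_i$ only if $O_i\in\{h(P_0),h(P_1)\}=\{O_0,O_1\}$ --- true --- so $g^h$ stabilizes $O_i$'' is a non sequitur, and the criterion is misstated: $g^h=h\inv gh$ stabilizes $O_i$ iff $O_i$ is a union of $g^h$-orbits, i.e.\ $O_i\in\{h\inv(O_0),h\inv(O_1)\}$, which is \emph{false} by part (i). That correctly rules out $g^h(O_i)=O_i$, but leaves completely open the case $g^h(O_0)=O_1$, i.e.\ $g^h$ \emph{swapping} the two orbits --- and a swap cannot be excluded by counting: it only yields that $(g^h)^2$ stabilizes $O_0$, which, when $pd=(q+1)/2$ is even, is combinatorially consistent ($O_0$ a union of two of the four $\GEN{(g^h)^2}$-orbits). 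By (\ref{StabilizerO}) a swap would place $g^h$ \emph{inside} $D$, so the only way to kill it is to show $g^h\notin D$, which is exactly the dihedral-group argument you omit. Likewise, your claim that (iv) is (ii) ``verbatim with $a$ in place of $g$'' fails for $d>1$: even in the non-swap case, ``$h(O_i)$ is $a$-invariant'' only says $h(O_i)$ is a union of $d$ of the $2d$ $a$-orbits of size $p$, which does not force $h(O_i)\in\{O_0,O_1\}$. So the reduction to (\ref{StabilizerO}) is the right frame, but without the statement ``$a^h,g^h\notin D$'' and its proof via the uniqueness of the cyclic subgroups of $D$, the proposal does not prove the lemma.
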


\begin{proof}
Let $h\in G\setminus D$. Then $a^h\not\in \GEN{a}$ and $g^h\not\in \GEN{g}$ and therefore $a^h,g^h\not\in D$ because $\GEN{a}$ and $\GEN{g}$ are the unique cyclic subgroups of $D$ of order $p$ and $pd$ respectively.
Then the lemmas follows from (\ref{StabilizerO}).
\end{proof}

Let $x,y\in \proj$. Clearly
    \begin{eqnarray}\label{ghat}
    \widehat{g}^\rho(x,y)=\left\{\matriz{{ll} 1, & \text{if } O(x)=O(y); \\0, &  \text{otherwise.}}\right.
    \end{eqnarray}
From (\ref{rho}) and  (\ref{ghat}) we have
    \begin{eqnarray*}
    \tau_h(x,y) &=& \sum_{\mu,\nu\in \proj} (I-\rho(g))(x,\mu)h^{\rho}(\mu,\nu)\widehat{g}^\rho(\nu,y) \\
        &=& \sum_{\nu\in \proj} (I-g^\rho))(x,h(\nu)) \widehat{g}^\rho(\nu,y) \\
        &=& \sum_{\nu\in O(y)} (I-g^\rho)(x,h(\nu)).
    \end{eqnarray*}
Clearly
    $$\sum_{\nu\in O(y)} I(x,h(\nu))= \left\{\matriz{{ll} 1, & \text{if } x\in h(O(y)); \\ 0, & \text{otherwise}}\right.$$
and
    $$\sum_{\nu\in O(y)} g^\rho(x,h(\nu))= \left\{\matriz{{ll} 1, & \text{if } x\in gh(O(y)); \\ 0, & \text{otherwise}}\right.$$
Therefore
    \begin{equation}\label{tauh}
    \tau_h(x,y) = \left\{\matriz{{cl}
        1, & \text{if } x\in h(O(y))\setminus gh(O(y)); \\
        -1, & \text{if } x\in gh(O(y))\setminus h(O(y)); \\
        0, & \text{otherwise.}}\right.
    \end{equation}
By Lemma~\ref{NStab} and (\ref{tauh}), we have $\tau_h\ne 0$,
    \begin{equation}\label{kertau}
    K:=\ker(\tau_h)=\left\{w=(w_{x})\in V:\sum_{x\in O_0} w_{x}=\sum_{x \in O_1} w_{x}\right\} \; \text{and} \; \Imagen(\tau_h)=\C \Psi^{(h)}.
    \end{equation}
with
    \begin{equation}\label{Psi}
    {\Psi^{(h)}}_x =
    \left\{ \matriz{{cl}
    1, & \text{if } x \in h(O_0)\cap gh(O_1); \\
    -1, & \text{if } x \in h(O_1)\cap gh(O_0); \\
    0, & \text{otherwise}.}\right.
    \end{equation}

In the remainder the $a$-orbits included in $O_i$ are denoted $O_{ij}$ and the representative of $O_{ij}$ in $Z$ (the set of representatives of $a$-orbits used to define $P$) is denoted $z_{ij}$ ($i=0,1$, $j=1,\dots,d$).

\begin{lemma}\label{Imker}
If $1<b<p$ then $\Imagen(\tau_h)\cap {V_{h,b}}^\perp=0$ and $V_{h,b}\not\subseteq K$.
\end{lemma}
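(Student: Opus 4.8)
The plan is to reduce both assertions to the nonvanishing of a sum of $p$-th roots of unity attached to a single $a$-orbit, and then to rule out simultaneous vanishing by invoking Lemma~\ref{NStab}. Throughout I use that $\{h^\rho\overline{P}_{.x}:x\in\proj\}$ is an orthogonal basis of $V$, that $V_{h,b}=\Span_{\C}\{h^\rho\overline{P}_{.a^b(z)},h^\rho\overline{P}_{.a^{-b}(z)}:z\in Z\}$, and that ${V_{h,b}}^\perp$ is the orthogonal complement of $V_{h,b}$.

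Since $\Imagen(\tau_h)=\C\Psi^{(h)}$ is a line, $\Imagen(\tau_h)\cap{V_{h,b}}^\perp=0$ is equivalent to $\Psi^{(h)}\notin{V_{h,b}}^\perp$, hence to $\GEN{\Psi^{(h)},h^\rho\overline{P}_{.a^b(z)}}\ne 0$ for some $z\in Z$; and $V_{h,b}\not\subseteq K$ will follow once I find $z\in Z$ with $h^\rho\overline{P}_{.a^b(z)}\notin K$. So the first thing I would do is make these two quantities explicit. By the definition of $P$, the vector $h^\rho\overline{P}_{.a^b(z)}$ is supported on $h(O'(z))$ with value $\zeta^{-b'b}$ at $h(a^{b'}(z))$. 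Since $h(O'(z))\subseteq h(O_i)$ when $z\in O_i$, formula (\ref{Psi}) gives $\GEN{\Psi^{(h)},h^\rho\overline{P}_{.a^b(z)}}=\pm\sum_{b'\in S_z}\zeta^{b'b}$, where, for $z\in O_i$ ($i\in\{0,1\}$), $S_z=\{b':h(a^{b'}(z))\in gh(O_{1-i})\}$. Likewise, because $b\not\equiv 0\pmod p$ the entries of $h^\rho\overline{P}_{.a^b(z)}$ sum to $0$ over $\proj$, so by (\ref{kertau}) one has $h^\rho\overline{P}_{.a^b(z)}\in K$ iff $\sum_{b'\in C_z}\zeta^{-b'b}=0$, where $C_z=\{b':h(a^{b'}(z))\in O_0\}$.

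Next, the hypothesis $1<b<p$ makes $\zeta^b$ a primitive $p$-th root of unity, and since the only $\Q$-linear relations among $1,\zeta,\dots,\zeta^{p-1}$ are multiples of $\sum_i\zeta^i$, a sum $\sum_{b'\in T}\zeta^{\pm b'b}$ over $T\subseteq\Z/p\Z$ vanishes only when $T=\emptyset$ or $T=\Z/p\Z$. So if $\GEN{\Psi^{(h)},h^\rho\overline{P}_{.a^b(z)}}=0$ for every $z\in Z$, then each $h(O'(z))$ with $z\in O_i$ is disjoint from or contained in $gh(O_{1-i})$; applying $h\inv$ and using $\proj=g^h(O_0)\sqcup g^h(O_1)$, this says every $a$-orbit lies inside $g^h(O_0)$ or inside $g^h(O_1)$, so $g^h(O_0)$ is a union of $a$-orbits and thus $a\,g^h(O_0)=g^h(O_0)$ — contradicting the special case $a g^h(O_0)\ne g^h(O_0)$ of Lemma~\ref{NStab}. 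Similarly, if $h^\rho\overline{P}_{.a^b(z)}\in K$ for every $z\in Z$, then each $h(O'(z))$ is disjoint from or contained in $O_0$; applying $h\inv$ and using $\proj=h\inv(O_0)\sqcup h\inv(O_1)$, this forces $h\inv(O_0)$ to be a union of $a$-orbits, so $a\,h\inv(O_0)=h\inv(O_0)$, which contradicts $a h\inv(O_0)\ne h\inv(O_0)$ — this is Lemma~\ref{NStab} applied to $h\inv\in G\setminus D$. Both parts follow.

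The computations in the middle are entirely routine given the diagonalization fixed in Section~\ref{SectionCommon}; the one point that requires a little thought, and which I regard as the crux, is the last step: recognizing that the conjunction of the ``disjoint-or-contained'' conditions over all $a$-orbits is precisely the assertion that $g^h(O_0)$ (respectively $h\inv(O_0)$) is $\GEN{a}$-invariant, which is exactly what Lemma~\ref{NStab} prohibits.
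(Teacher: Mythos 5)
Your proof is correct and follows essentially the same route as the paper: both reduce each assertion to the vanishing of a sum of powers of $\zeta^{b}$ indexed by a subset of an $a$-orbit, invoke the minimal polynomial $1+X+\cdots+X^{p-1}$ to force that subset to be empty or everything (i.e.\ each $a$-orbit lies entirely inside or outside $g^h(O_0)$, resp.\ $h\inv(O_0)$), and then contradict Lemma~\ref{NStab}. The only difference is cosmetic — you argue the contrapositive over all $z\in Z$ where the paper first selects a single straddling $a$-orbit — so nothing further is needed.
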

\begin{proof} By Lemma~\ref{NStab}, $ag^h(O_0)\ne g^h(O_0)$. Therefore $g^h(O_0)$ is not a union of $a$-orbits. In other words, there is an $a$-orbit $O_{ij}$ which intersects non-trivially both $g^h(O_0)$ and $g^h(O_1)$. Let $x=a^b(z_{ij})$ for some $0\le b < p$.
We claim that $\Psi^h$ and $h^\rho \overline{P}_{.x}$ are not orthogonal. Otherwise
    $$0=\GEN{h^\rho \overline{P}_{.x},\Psi^{(h)}}  = \sum_{0\le b'<p} \Psi^{(h)}_{h a^{b'}(z_{ij})}\zeta^{-b'b}.$$
Since $\zeta^{-b}$ is a primitive root of unity, its minimal polynomial over $\Q$ is $1+X+X^2+\dots+X^{p-1}$. As the coefficients ${\Psi^{(h)}}_{ha^{b'}(z_{ij})}$ are integers, they are all equal, or equivalently ${\Psi^{(h)}}_{h(x)}$ is constant for $x$ running in $O_{ij}$. Since $h(O_{ij})$ is contained in either $h(O_0)$ or $h(O_1)$, using (\ref{Psi}), we deduce that $O_{ij}$ is contained in $g^h(O_0)$ or $g^h(O_1)$ contradicting the election of $O_{ij}$.
This proves the claim. It implies that $\Psi^{(h)}\not\in {V_{h,b}}^\perp$ and, since $\Imagen(\tau_h)=\C\Psi^{(h)}$, we conclude that $\Imagen(\tau_h)\cap {V_{h,b}}^\perp=0$, as desired.

To prove that $V_{h,b}$ is not contained in $K$ we now select an $a$-orbit $O_{ij}$ such that $h(O_{ij})\not\subseteq O_0$ and $h(O_{ij})\not\subseteq O_1$ and set $x=a^b(z_{ij})$. The existence of such $a$-orbit is consequence of $ah(O_0)\ne h(O_0)$ (Lemma~\ref{NStab}). The above calculations shows $\sum_{z\in O_0} (h^\rho \overline{P}_{.x})_z = \sum_{b',ha^{b'}(z_{ij})\in O_0} \zeta^{-b'b}$ and $\sum_{z\in O_1} (h^\rho \overline{P}_{.x})_z = \sum_{b',ha^{b'}(z_{ij})\in O_1} \zeta^{-b'b}$.
If $h^\rho \overline{P}_{.x} \in K$ then
    $$\sum_{b',h(a^{b'}(z_{ij}))\in O_0} \zeta^{-b'b} = \sum_{b',h(a^{b'}(z_{ij}))\in O_1} \zeta^{-b'b},$$
by (\ref{kertau}). As in the previous paragraph this implies that either $h(O_{ij})\subseteq O_0$ or $h(O_{ij})\subseteq O_1$, contradicting the election of $O_{ij}$. Thus $h^\rho \overline{P}_{.x}\not\in K$ and therefore $V_{h,b}\not\subseteq K$ as desired.
\end{proof}

Let $W=(V_{h,+}\cap K)+(V_{h,-}\cap K)$ and $\overline{V}=V/W$ and let $\overline{\tau_h}$ and $\overline{S_h}$ be the endomorphisms of $\overline{V}$ induced by $\tau_h$ and $S_h={{u_h}^\rho}$ respectively. Then $W\subseteq {V_{h,b}}^\perp$ for some $1\le b \le \frac{p-1}{2}$, so that $\Imagen(\tau_h)\not\subseteq W$ and
$\tau_h(V_{h,-})=\tau_h(V_{h,+})=\Imagen(\tau_h)$, by Lemma~\ref{Imker}. Therefore, $\overline{V_{h,+}},\overline{V_{h,-}}\not\subseteq \ker(\overline{\tau_u})$. As $K$ is a hyperplane of $V$, $\dim_{\C}(\overline{V_{h,+}})=\dim_{\C}(\overline{V_{h,-}})=1$. Hence
    \begin{equation}\label{FirstCondition}
    \overline{V_{h,+}}\cap \ker(\overline{\tau_u})=\overline{V_{h,-}}\cap \ker(\overline{\tau_u})=0.
    \end{equation}
This verifies that the first two intersections of Theorem~\ref{STau} are trivial with $\overline{S_h}$ and $\overline{\tau_h}$ playing the roles of $S$ and $\tau$ respectively.

To deal with the other two intersections notice that $(\overline{V_{h,0}}+\overline{V_{h,+}})\cap \Imagen(\overline{\tau_u})=(\overline{V_{h,0}}+\overline{V_{h,-}})\cap \Imagen(\overline{\tau_u})=0$ if and only if $\pi^h_{b_+}(\Psi^{(h)}),\pi^h_{b_-}(\Psi^{(h)})\not\in K$. The following lemma implies that $\pi^h_{b_+}(\Psi^{(h)})\in K$ if and only if $\pi^h_{b_-}(\Psi^{(h)})\in K$ and provides and equivalent combinatorial condition. For that we introduce some convenient notation.

For each $i,j,k\in \{0,1\}$ and $\frac{1-p}{2} \le b \le \frac{p-1}{2}$ set
    $$m_{jk} = \lvert h(O_j) \cap gh(O_k) \rvert \quad \text{and} \quad m^{(b)}_{ijk}= \lvert ha^bh\inv(O_i)\cap h(O_j) \cap gh(O_k) \rvert.$$
Then $m_{i0}+m_{i1}=m_{0i}+m_{1i}=\lvert O_i \rvert =\frac{q+1}{2}$. Thus $m_{01}=\frac{q+1}{2}-m_{00}=\frac{q+1}{2}-\left(\frac{q+1}{2}-m_{10}\right)=m_{10}$ and similarly $m_{00}=m_{11}$. Hence
\begin{equation}\label{mb0110}
m^{(b)}_{010}+m^{(b)}_{110}=m_{10}=m_{01}=m^{(b)}_{001}+m^{(b)}_{101}.
\end{equation}
As $\tau_h^2=0$, we have $\Psi^{(h)}\in K$. Thus $\sum_{x\in O_0} {\Psi^{(h)}}_x=\sum_{x\in O_1} {\Psi^{(h)}}_x$. Using (\ref{Psi}) we deduce that
    $$m^{(0)}_{001}-m^{(0)}_{010}=m^{(0)}_{101}-m^{(0)}_{110}.$$
Combining this with (\ref{mb0110}), for $b=0$, we obtain
    \begin{equation}\label{m00110}
    m^{(0)}_{001}=m^{(0)}_{010}\quad \text{and} \quad m^{(0)}_{101}=m^{(0)}_{110}.
    \end{equation}

\begin{lemma}\label{PsihbinK}
The following conditions are equivalent for $h\in G\setminus D$.
\begin{enumerate}
\item $\pi^h_b(\Psi^{(h)})\in K$ for soºe $0<b\le \frac{p-1}{2}$.
\item $\pi^h_b(\Psi^{(h)})\in K$ for all $0\le b\le \frac{p-1}{2}$.
\item $m^{(b)}_{001}+m^{(-b)}_{001}=m^{(b)}_{010}+m^{(-b)}_{010}$ for every $0<b\le \frac{p-1}{2}$.
\item $m^{(b)}_{101}+m^{(-b)}_{101}=m^{(b)}_{110}+m^{(-b)}_{110}$ for every $0<b\le \frac{p-1}{2}$.
\end{enumerate}
\end{lemma}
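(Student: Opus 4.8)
The plan is to convert each membership $\pi^h_b(\Psi^{(h)})\in K$ into an explicit vanishing of a $\Z$-linear combination of powers of $\zeta$, and then to apply the same cyclotomic argument already used in Lemma~\ref{Imker} and in Section~\ref{SectionEven}. First I would compute, for $0\le b\le\frac{p-1}{2}$, the quantity $\sum_{x\in O_0}\pi^h_b(\Psi^{(h)})_x-\sum_{x\in O_1}\pi^h_b(\Psi^{(h)})_x$, which by (\ref{kertau}) vanishes precisely when $\pi^h_b(\Psi^{(h)})\in K$. Using the projection formulas (\ref{Projection}) and (\ref{Projection0}), the description (\ref{Psi}) of $\Psi^{(h)}$, and the identity
    $$\sum_{x\in O_i}\Psi^{(h)}_{ha^ch\inv(x)}=\lvert ha^ch\inv(O_i)\cap h(O_0)\cap gh(O_1)\rvert-\lvert ha^ch\inv(O_i)\cap h(O_1)\cap gh(O_0)\rvert=m^{(c)}_{i01}-m^{(c)}_{i10}$$
(with the superscript read modulo $p$, so the same holds with $-c$ in place of $c$), this difference equals $\frac{1}{p}\sum_{c=0}^{p-1}\bigl[(m^{(c)}_{001}-m^{(c)}_{010}-m^{(c)}_{101}+m^{(c)}_{110})+(m^{(-c)}_{001}-m^{(-c)}_{010}-m^{(-c)}_{101}+m^{(-c)}_{110})\bigr]\zeta^{cb}$ when $b>0$, and $\frac{1}{p}\sum_{c=0}^{p-1}(m^{(c)}_{001}-m^{(c)}_{010}-m^{(c)}_{101}+m^{(c)}_{110})$ when $b=0$. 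Invoking (\ref{mb0110}) in the form $m^{(c)}_{101}-m^{(c)}_{110}=m^{(c)}_{010}-m^{(c)}_{001}$, each bracketed block collapses to $2(m^{(c)}_{001}-m^{(c)}_{010})$, so with $\mu_c:=(m^{(c)}_{001}+m^{(-c)}_{001})-(m^{(c)}_{010}+m^{(-c)}_{010})$ the criterion takes the clean form: $\pi^h_b(\Psi^{(h)})\in K$ iff $\sum_{c=0}^{p-1}\mu_c\zeta^{cb}=0$ when $0<b\le\frac{p-1}{2}$, and iff $\sum_{c=0}^{p-1}\mu_c=0$ when $b=0$. Moreover $\mu$ is a symmetric sequence of integers, $\mu_{-c}=\mu_c$, and $\mu_0=2(m^{(0)}_{001}-m^{(0)}_{010})=0$ by (\ref{m00110}).

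The decisive observation is then purely cyclotomic. For a single $b_0$ with $0<b_0\le\frac{p-1}{2}$ one has $\gcd(b_0,p)=1$, so $\zeta^{b_0}$ is a primitive $p$-th root of unity, its minimal polynomial over $\Q$ is $1+X+\dots+X^{p-1}$, and $\{1,\zeta^{b_0},\dots,\zeta^{b_0(p-2)}\}$ is a $\Q$-basis of $\Q(\zeta)$. Since the exponents $cb_0$, $c=0,\dots,p-1$, form a complete residue system modulo $p$, rewriting $\sum_{c=0}^{p-1}\mu_c\zeta^{cb_0}=0$ with $\zeta^{b_0(p-1)}=-(1+\zeta^{b_0}+\dots+\zeta^{b_0(p-2)})$ and using that the $\mu_c$ are integers forces all of them to be equal; as $\mu_0=0$ this yields $\mu_c=0$ for every $c$. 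Thus the condition $\sum_{c=0}^{p-1}\mu_c\zeta^{cb_0}=0$ for one such $b_0$ is equivalent to $\mu\equiv 0$.

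Assembling these facts gives the four equivalences. Condition (3) states exactly that $\mu_b=0$ for all $0<b\le\frac{p-1}{2}$, which together with $\mu_0=0$ and $\mu_{-b}=\mu_b$ means $\mu\equiv 0$; by the first paragraph this forces $\sum_{c=0}^{p-1}\mu_c\zeta^{cb}=0$ for every $b>0$ and $\sum_{c=0}^{p-1}\mu_c=0$, i.e.\ (2) holds. Trivially (2) implies (1). Conversely (1) furnishes one $0<b_0\le\frac{p-1}{2}$ with $\sum_{c=0}^{p-1}\mu_c\zeta^{cb_0}=0$, hence $\mu\equiv 0$ by the second paragraph, hence (3). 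Finally, the identity $m^{(c)}_{101}-m^{(c)}_{110}=m^{(c)}_{010}-m^{(c)}_{001}$ gives $(m^{(b)}_{101}+m^{(-b)}_{101})-(m^{(b)}_{110}+m^{(-b)}_{110})=-\mu_b$, so (4) also reads $\mu_b=0$ for all $0<b\le\frac{p-1}{2}$, which is (3). The only step needing genuine care is the bookkeeping in the first paragraph, matching the signs in $\sum_{x\in O_i}\Psi^{(h)}_{ha^{\pm c}h\inv(x)}=m^{(\pm c)}_{i01}-m^{(\pm c)}_{i10}$ against the definitions of $\Psi^{(h)}$ and $m^{(b)}_{ijk}$, and correctly expanding the summand of (\ref{Projection}); this is routine permutation-matrix algebra, and once the expression $\sum_c\mu_c\zeta^{cb}$ is in hand the rest is the familiar minimal-polynomial argument, so no conceptually new obstacle appears.
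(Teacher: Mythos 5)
Your proof is correct and follows essentially the same route as the paper: both reduce each membership $\pi^h_b(\Psi^{(h)})\in K$ via the projection formula (\ref{Projection}) and the counts $m^{(b)}_{ijk}$ to the vanishing of an integer combination $\sum_c \mu_c\zeta^{cb}$, then invoke the minimal polynomial $1+X+\cdots+X^{p-1}$ of $\zeta^{b}$ together with $\mu_0=0$ from (\ref{m00110}) and the symmetry (\ref{mb0110}). The only cosmetic difference is that you work directly with the difference $\sum_{O_0}-\sum_{O_1}$ while the paper shows the two sums are negatives of each other; the content is identical.
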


\begin{proof}
Let $0<b_0\le \frac{p-1}{2}$. Using (\ref{Projection}) and (\ref{Psi}) we have the following for every $i=0,1$:
    \begin{eqnarray*}
    p\sum_{x\in O_i} \pi^h_{b_0}(\Psi^{(h)})_x &=& \sum_{b} \left(\sum_{x\in O_i} ({\Psi^{(h)}}_{ha^bh\inv(x)}+{\Psi^{(h)}}_{ha^{-b}h\inv(x)})\right)\zeta^{bb_0} \\
    &=& \sum_{b} \left(\sum_{x\in ha^bh\inv(O_i)} {\Psi^{(h)}}_x+\sum_{x\in ha^{-b}h\inv(O_i)}{\Psi^{(h)}}_x\right)\zeta^{bb_0} \nonumber\\
    &=& \sum_{b} (m^{(b)}_{i01}+m^{(-b)}_{i01}-m^{(b)}_{i10}-m^{(-b)}_{i10}) \zeta^{bb_0} \nonumber
    \end{eqnarray*}
Combining this with (\ref{mb0110}) we have
    \begin{eqnarray*}
    p\sum_{x\in O_0} \pi^h_{b_0}(\Psi^{(h)})_x &=& \sum_{b} (m^{(b)}_{001}+m^{(-b)}_{001}-m^{(b)}_{010}-m^{(-b)}_{010}) \zeta^{bb_0} \\ &=&
    \sum_{b} (m^{(b)}_{110}+m^{(-b)}_{110}-m^{(b)}_{101}-m^{(-b)}_{101}) \zeta^{bb_0} \\
		&=& - p\sum_{x\in O_1} \pi^h_{b_0}(\Psi^{(h)})_x.
    \end{eqnarray*}
Using this and (\ref{kertau}) we deduce that $\pi^h_{b_0}(\Psi^{(h)})\in K$ if and only if $\sum_{x\in O_0} \pi^h_{b_0}(\Psi^{(h)})_x=0$ if and only if
$\sum_{x\in O_1} \pi^h_{b_0}(\Psi^{(h)})_x=0$ if and only if $\sum_{b} (m^{(b)}_{001}+m^{(-b)}_{001}-m^{(b)}_{010}-m^{(-b)}_{010}) \zeta^{bb_0}=0$ if and only if
$\sum_{b} (m^{(b)}_{110}+m^{(-b)}_{110}-m^{(b)}_{101}-m^{(-b)}_{101}) \zeta^{bb_0}=0$. Furthermore, the minimal polynomial of $\zeta$ over $\Q$ is $1+X+X^2+\dots+X^{p-1}$ and therefore $\sum_{b} (m^{(b)}_{001}+m^{(-b)}_{001}-m^{(b)}_{010}-m^{(-b)}_{010}) \zeta^{bb_0}=0$ if and only if
$c_{b0}:=m^{(b)}_{001}+m^{(-b)}_{001}-m^{(b)}_{010}-m^{(-b)}_{010}$ is independent of $b$ if and only if
$c_{b1}:=m^{(b)}_{101}+m^{(-b)}_{101}-m^{(b)}_{110}-m^{(-b)}_{110}$ is independent of $b$. By (\ref{m00110}), $c_{00}=c_{01}=0$. This proves that 1, 3 and 4 are equivalent and they imply that $\pi^h_{b}(\Psi^{(h)})\in K$ for every $0<b\le\frac{p-1}{2}$.
Finally, as $\Psi^{(h)}\in K$, if $\pi^h_{b}(\Psi^{(h)})\in K$ for every $0<b\le \frac{p-1}{2}$ then $\pi^h_0(\Psi^{(h)})=\Psi^{(h)}-\sum\limits_{b=1}^{\frac{p-1}{2}} \pi^h_b(\Psi^{(h)}) \in K$. This finishes the proof.
\end{proof}

Combining~\ref{FirstCondition} and Lemma~\ref{PsihbinK} we deduce

\begin{corollary}\label{NecSufCond}
	The following are equivalent for $h\in G\setminus D$.
\begin{enumerate}
 \item $\overline{S_h}$ and $\overline{\tau_h}$ satisfy the hypothesis of Theorem~\ref{STau}.
 \item $\lvert ha^bh\inv(O_0)\cap h(O_0) \cap gh(O_1) \rvert + \lvert ha^{-b}h\inv(O_0)\cap h(O_0) \cap gh(O_1)\rvert \ne \\
    \lvert ha^bh\inv(O_0)\cap h(O_1) \cap gh(O_0)\rvert+ \lvert ha^{-b}h\inv(O_0)\cap h(O_1) \cap gh(O_0)\rvert$ for some $1\le b \le \frac{p-1}{2}$.
 \item $\lvert ha^bh\inv(O_1)\cap h(O_0) \cap gh(O_1) \rvert + \lvert ha^{-b}h\inv(O_1)\cap h(O_0) \cap gh(O_1)\rvert \ne \\
    \lvert ha^bh\inv(O_1)\cap h(O_1) \cap gh(O_0) \rvert + \lvert ha^{-b}h\inv(O_1)\cap h(O_1) \cap gh(O_0)\rvert$ for some $1\le b \le \frac{p-1}{2}$.
\end{enumerate}
\end{corollary}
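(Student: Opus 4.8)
The statement is a direct consequence of (\ref{FirstCondition}) and Lemma~\ref{PsihbinK}, and the plan is to match the four intersection conditions of Theorem~\ref{STau} --- applied to $\overline{S_h}$ and $\overline{\tau_h}$ in the roles of $S$ and $\tau$ --- against the combinatorial data $m^{(b)}_{ijk}$. First I would recall that (\ref{FirstCondition}) has been established for \emph{every} $h\in G\setminus D$, and that it is precisely the assertion that the first two of the four intersections of Theorem~\ref{STau}, namely $\overline{V_{h,+}}\cap\ker(\overline{\tau_h})$ and $\overline{V_{h,-}}\cap\ker(\overline{\tau_h})$, are trivial. Consequently condition~(1) of the Corollary is equivalent to the triviality of the two remaining intersections $\Imagen(\overline{\tau_h})\cap(\overline{V_{h,0}}\oplus\overline{V_{h,+}})$ and $\Imagen(\overline{\tau_h})\cap(\overline{V_{h,0}}\oplus\overline{V_{h,-}})$; and, as observed in the paragraph preceding Lemma~\ref{PsihbinK} (using $\Imagen(\tau_h)=\C\Psi^{(h)}$ and $\Imagen(\tau_h)\not\subseteq W$), this pair of intersections is trivial if and only if $\pi^h_{b_+}(\Psi^{(h)})\notin K$ and $\pi^h_{b_-}(\Psi^{(h)})\notin K$.

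Next I would feed this into Lemma~\ref{PsihbinK}. Both indices $b_+$ and $b_-$ are covered by that lemma, since its statement~(2) ranges over all $0\le b\le\frac{p-1}{2}$ and at least one of $b_+,b_-$ is nonzero (the eigenvalues of $S_h$ all have distinct absolute value); hence $\pi^h_{b_+}(\Psi^{(h)})\in K$ and $\pi^h_{b_-}(\Psi^{(h)})\in K$ are each equivalent to statement~(3) of Lemma~\ref{PsihbinK} (equivalently, to statement~(4)), and in particular to one another. Therefore the two outstanding intersections are trivial if and only if statement~(3) of Lemma~\ref{PsihbinK} fails --- that is, $m^{(b)}_{001}+m^{(-b)}_{001}\ne m^{(b)}_{010}+m^{(-b)}_{010}$ for some $0<b\le\frac{p-1}{2}$ --- and equivalently if and only if statement~(4) fails --- that is, $m^{(b)}_{101}+m^{(-b)}_{101}\ne m^{(b)}_{110}+m^{(-b)}_{110}$ for some such $b$. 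Unwinding the definition $m^{(b)}_{ijk}=\lvert ha^bh\inv(O_i)\cap h(O_j)\cap gh(O_k)\rvert$, the first inequality is condition~(2) of the Corollary and the second is condition~(3), which completes the equivalence $(1)\Leftrightarrow(2)\Leftrightarrow(3)$.

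I do not anticipate a real obstacle: all of the linear algebra and the combinatorial computation underlying it have already been carried out in (\ref{FirstCondition}), in the remark just before Lemma~\ref{PsihbinK}, and inside Lemma~\ref{PsihbinK} itself, so the Corollary is pure bookkeeping. The one point that deserves to be spelled out is why Lemma~\ref{PsihbinK} is exactly what is needed here: it guarantees that a single membership relation $\pi^h_b(\Psi^{(h)})\in K$ forces all of them, which is what collapses the control of the two intersections $\Imagen(\overline{\tau_h})\cap(\overline{V_{h,0}}\oplus\overline{V_{h,\pm}})$ down to the single combinatorial inequality that appears, in two symmetric forms, in conditions~(2) and~(3).
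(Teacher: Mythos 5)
Your proposal is correct and takes essentially the same route as the paper, whose own proof is the single line ``Combining~\ref{FirstCondition} and Lemma~\ref{PsihbinK} we deduce'' the corollary; your unwinding of the bookkeeping (first two intersections always trivial, remaining two trivial iff $\pi^h_{b_\pm}(\Psi^{(h)})\notin K$, then Lemma~\ref{PsihbinK} to convert this into the combinatorial inequalities) is exactly what is intended. The only imprecision is the parenthetical ``at least one of $b_+,b_-$ is nonzero'': in fact \emph{both} are nonzero (the eigenvalue at $b=0$ is $u_{k,m}(1)=1$, while the remaining eigenvalues are not all of modulus one and have product of modulus one, so $r_+>1>r_-$), and this stronger fact is what makes each of the memberships $\pi^h_{b_+}(\Psi^{(h)})\in K$ and $\pi^h_{b_-}(\Psi^{(h)})\in K$ separately equivalent to condition~(1) of Lemma~\ref{PsihbinK}, as your argument requires.
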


In the compact notation introduced above condition 2 (respectively, condition 3) of Corollary~\ref{NecSufCond} takes the form
$m^{(b)}_{001}+m^{(-b)}_{001}\ne m^{(b)}_{010}+m^{(-b)}_{010}$ (respectively, $m^{(b)}_{101}+m^{(-b)}_{101}\ne m^{(b)}_{110}+m^{(-b)}_{110}$) for some $1\le b\le \frac{p-1}{2}$.

The following lemma provides a sufficient condition for $\overline{S_h}$ and $\overline{\tau_h}$ to satisfy the hypothesis of Theorem~\ref{STau}.

\begin{lemma}\label{SufCond}
Let $h\in G\setminus D$. If $m^{(b)}_{001}+m^{(-b)}_{001}=m^{(b)}_{010}+m^{(-b)}_{010}$ for every $0<b<p$ then
	$$\sum_{j=1}^{d} \lvert h(O_{0j})\cap O_0\rvert  \; \lvert O_{0j}\cap g^h(O_1)\rvert = \sum_{j=1}^{d} \lvert h(O_{1j})\cap O_0\rvert \; \lvert O_{1j}\cap g^h(O_0)\rvert.$$
\end{lemma}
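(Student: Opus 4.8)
The plan is to recognise the two sides of the asserted identity as the $b$-sum, over $b\in\Z/p\Z$, of the quantities $m^{(b)}_{001}$ and $m^{(b)}_{010}$ respectively, and then to deduce the equality of those two $b$-sums from the hypothesis together with \eqref{m00110}. (Note that $m^{(b)}_{ijk}$ depends only on $b$ modulo $p$, since $a^b$ does, so these $b$-sums make sense.)

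First I would prove the combinatorial translation. Fix $x\in\proj$; then $x\in ha^bh\inv(O_i)$ is equivalent to $a^{-b}(h\inv(x))\in h\inv(O_i)$, and as $b$ runs over $\Z/p\Z$ the point $a^{-b}(h\inv(x))$ runs, exactly once, over the whole $a$-orbit $O'(h\inv(x))$ (here one uses that $a$ has order exactly $p$). Hence the number of $b\in\Z/p\Z$ with $x\in ha^bh\inv(O_i)$ equals $\lvert h(O'(h\inv(x)))\cap O_i\rvert$, and for $x\in h(O_{0j})$ this is simply $\lvert h(O_{0j})\cap O_i\rvert$. Now write $m^{(b)}_{i01}$ as a count over the points $x$ of the target intersection $h(O_0)\cap gh(O_1)$, sum over $b$, interchange the two summations, and split $h(O_0)\cap gh(O_1)$ into the $d$ pieces $h(O_{0j})\cap gh(O_1)$; this yields
\[
\sum_{b\in\Z/p\Z} m^{(b)}_{i01}=\sum_{j=1}^{d}\lvert h(O_{0j})\cap gh(O_1)\rvert\,\lvert h(O_{0j})\cap O_i\rvert .
\]
Because $g^h=h\inv g h$ we have $gh(O_1)=h(g^h(O_1))$, so $\lvert h(O_{0j})\cap gh(O_1)\rvert=\lvert O_{0j}\cap g^h(O_1)\rvert$; taking $i=0$ recovers the left-hand side of the asserted identity. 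An entirely parallel argument, now splitting $h(O_1)\cap gh(O_0)$ along the $a$-orbits $O_{1j}$ and using $gh(O_0)=h(g^h(O_0))$, gives
\[
\sum_{b\in\Z/p\Z} m^{(b)}_{i10}=\sum_{j=1}^{d}\lvert O_{1j}\cap g^h(O_0)\rvert\,\lvert h(O_{1j})\cap O_i\rvert ,
\]
and for $i=0$ this is the right-hand side.

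It then remains to show $\sum_{b\in\Z/p\Z}m^{(b)}_{001}=\sum_{b\in\Z/p\Z}m^{(b)}_{010}$. Summing the hypothesis $m^{(b)}_{001}+m^{(-b)}_{001}=m^{(b)}_{010}+m^{(-b)}_{010}$ over $b=1,\dots,p-1$ and observing that $b\mapsto -b$ permutes the residues $1,\dots,p-1$ modulo $p$, the left-hand side collapses to $2\sum_{b=1}^{p-1}m^{(b)}_{001}$ and the right-hand side to $2\sum_{b=1}^{p-1}m^{(b)}_{010}$, so $\sum_{b=1}^{p-1}m^{(b)}_{001}=\sum_{b=1}^{p-1}m^{(b)}_{010}$. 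Adding the $b=0$ terms, which are equal by \eqref{m00110}, completes the argument.

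I do not expect a genuine obstacle: the only delicate part is the first step, where one must carry the threefold intersections and the conjugation identity $g^h=h\inv gh$ through correctly, but this is pure bookkeeping — the content is just the observation that summing over $b$ of the membership condition ``$x\in ha^bh\inv(O_i)$'' amounts to counting points along $a$-orbits, and the stated hypothesis is precisely the $b$-averaged form of the desired conclusion.
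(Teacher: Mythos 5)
Your proposal is correct and follows essentially the same route as the paper: both arguments identify each side of the asserted identity with the full sum $\sum_{b\bmod p}m^{(b)}_{001}$ (resp.\ $\sum_{b\bmod p}m^{(b)}_{010}$) by a double count along $a$-orbits, and then deduce equality of these sums from the hypothesis together with the $b=0$ identity \eqref{m00110}. The only cosmetic difference is that the paper shifts the $b$-dependence onto the factor $a^{-b}g^h(O_k)$ and sums a characteristic function, while you count, for fixed $x$, the number of $b$ with $x\in ha^bh\inv(O_i)$; these are the same computation.
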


\begin{proof}
As $O_j$ is $a$-invariant we have
    $$m^{(b)}_{ijk}=\lvert h a^bh(O_i)\cap h\inv(O_j) \cap gh(O_k) \rvert = \lvert h(O_i)\cap O_j \cap a^{-b}g^h(O_k)\rvert$$
for every $i,j,k\in \{1,2\}$.
Let $\chi_i$ be the characteristic function of $O_i\cap h\inv(O_0)$ and $\kappa_j$ the characteristic function of $O_i\cap g^h(O_j)$ for $\{i,j\}=\{0,1\}$. Then
    \begin{eqnarray*}
    \sum_{b=\frac{1-p}{2}}^{\frac{p-1}{2}} m^{(b)}_{001} &=& \sum_{b=\frac{1-p}{2}}^{\frac{p-1}{2}} \lvert h\inv(O_0) \cap O_0 \cap a^b g^h(O_1) \rvert =
    \sum_{b=\frac{1-p}{2}}^{\frac{p-1}{2}} \sum_{x\in O_0} \chi_0(x)\kappa_1(a^{-b}x) \\ &=&
    \sum_{x\in O_0} \chi_0(x) \sum_{b=0}^{p-1} \kappa_1(a^{-b}x) = \sum_{j=1}^{d} \sum_{x\in O_{0j}} \chi_0(x) \sum_{b=0}^{p-1} \kappa_1(a^{-b}x) \\
    &=&
    \sum_{j=1}^{d} \sum_{x\in O_{0j}} \chi_0(x) \lvert O_{0j}\cap g^h(O_1) \rvert =
    \sum_{j=1}^{d} \lvert h(O_{0j})\cap O_0 \rvert \; \lvert O_{0j}\cap g^h(O_1)\rvert.
    \end{eqnarray*}
Similarly
    $$\sum_{b=\frac{1-p}{2}}^{\frac{p-1}{2}} m^{(b)}_{010}  = \sum_{j=1}^{d} \lvert h(O_{1j})\cap O_0 \rvert \; \lvert O_{1j}\cap g^h(O_0) \rvert.$$
Using (\ref{m00110}) and the assumption we have
    \begin{eqnarray*}
    \sum_{j=1}^{d} \lvert h(O_{0j})\cap O_0 \rvert \; \lvert O_{0j}\cap g^h(O_1) \rvert &=& \sum_{b=\frac{1-p}{2}}^{\frac{p-1}{2}} m^{(b)}_{001} =
    m^{(0)}_{001}+\sum_{b=0}^{\frac{p-1}{2}} (m^{(b)}_{001}+m^{(-b)}_{001}) = \\
    m^{(0)}_{010}+\sum_{b=0}^{\frac{p-1}{2}} (m^{(b)}_{010}+m^{(-b)}_{010}) &=&
		\sum_{b=\frac{1-p}{2}}^{\frac{p-1}{2}} m^{(b)}_{010} =\sum_{j=1}^{d} \lvert h(O_{1j})\cap O_0 \rvert \; \lvert O_{1j}\cap g^h(O_0) \rvert,
    \end{eqnarray*}
as desired.
\end{proof}

Theorem~\ref{Odd} is now a direct consequence of Corollary~\ref{NecSufCond} and Lemma~\ref{SufCond}. Finally

\begin{proofof}\textbf{Corollary~\ref{q+1=2p}}.
The assumption implies that $d=1$ and hence $O_{i1}=O_i$.
We argue by contradiction, so we assume that the corollary is false. Then, by Corollary~\ref{OddCor1}, for every $h\in G\setminus D$, condition (\ref{SumbConclusion}) fails and in this case it takes the form
    $$\lvert h(O_0)\cap O_0 \rvert \; \lvert O_0\cap g^h(O_1) \rvert = \lvert h(O_1)\cap O_0 \rvert \; \lvert O_1\cap g^h(O_0) \rvert.$$
By (\ref{mb0110}), $\lvert O_0\cap g^h(O_1) \rvert = \lvert O_1\cap g^h(O_0) \rvert$. Thus $\lvert h(O_0)\cap O_0 \rvert = \lvert h(O_1)\cap O_0 \rvert$ and hence $p= \lvert O_0 \rvert = \lvert h(O_0)\cap O_0 \rvert + \lvert h(O_1)\cap O_0 \rvert = 2\lvert h(O_0)\cap O_0 \rvert$, contradicting the fact that $p$ is odd.
\end{proofof}

\section{A computer verification}

With the help of the computer software GAP \cite{GAP}, we have verified the existence of an element $h\in G\setminus D_G(a)$ satisfying (\ref{SumbConclusion}) for every odd prime power $q< 10000$ and all the odd prime divisor $p$ of $q+1$ with $p>5$.
This verifies Conjecture~\ref{MainConjecture} for every $q<10000$.
In fact, in all the examples more than 90\% of the $h$'s in $G\setminus D_G(a)$ satisfies (\ref{SumbConclusion}), but not all.
Proving that it does exist would complete the proof of Conjecture~\ref{MainConjecture}.
Unfortunately we have not been able to find a pattern on these $h$'s to be able to show that it always exists.

In this section we present the functions used in our verification of Conjecture~\ref{MainConjecture}.
\verb+VerifierCondition3(p,q,m)+ selects randomly up to $m$ elements $h\in G\setminus D_G(a)$, with $G=\PSL(2,q)$ and $a$ an element of order $p$ in $G$ and checks condition (\ref{SumbConclusion}).
If the condition is satisfied for one $h$ then it returns true and otherwise it returns false.
\verb+VerifierCondition3Arb(m,n0,n)+, checks if \verb+VerifierCondition3(p,q,m)+ returns true for every odd prime power $q$ between \verb+n0+ and \verb+n+ and every prime divisor $p$ of $q+1$ with $p>5$. In that case it returns true.
Otherwise it returns the first pair $(q,p)$ for which \verb+VerifierCondition3(p,q,m)+ returns false.
The output of \verb+VerifierCondition3Arb(m,n0,n)+ is true and hence verifies the Conjecture~\ref{MainConjecture} for $q<10000$.

\begin{verbatim}
VerifierCondition3 := function(p,q,m)

local
G, # PSL(2,q)
g, # An element of G of order (q+1)/2
O, # Orbits of g
d, # (q+1)/(2*p);
a, # g^d an element of order p,
Oa,# Orbits of a,
x,y,i,j, # Auxiliar variables
h, # An element of G,
hOa, # h applied to the orbits of a
ghO, # g^h applied to the orbits of a
si,sd # Sums in the left and right side of equation (3);

# INICIALITATION

G := PSL(2,q);
g := Random(G);
while Order(g)<>(q+1)/2 do
  g := Random(G);
od;
O := Orbits(Group(g));
d := (q+1)/(2*p);
a := g^d;

# HERE ONE COMPUTES THE a-ORBITS IN TWO LISTS
# SEPARATING THE a-ORBITS CONTAINED IN THE TWO ORBITS OF g.
Oa := List([1..2] , x -> List( [1..d] , y -> []) );
x := O[1][1]; y := O[2][1]; j := 0;
for i in [1..(q+1)/2] do
  Add(Oa[1][j+1],x);
    Add(Oa[2][j+1],y);
  x := x^g;
  y := y^g;
  j := (j+1) mod d;
od;

# THIS IS THE MAIN PART OF THE FUNCTION WHERE ONE TRIES
# m TIMES TO FIND AN ELEMENT h SATISFYING CONDITION (3)
i := 0;
while i < m do
  h := Random(G);
  while g*h=h*g or g*h=h*g^-1 do
    h:=Random(G);
  od;
    i := i+1;
    hOa := List( [1..2] ,
				i -> List( [1..d], j-> List( [1..p] , k -> Oa[i][j][k]^h ) ) );
    ghO := List( [1..2] ,
				i -> List( [1..(q+1)/2] , j-> O[i][j]^(g^h) ) );
# si AND sd ARE THE TWO SIDES OF INEQUATION (3) FOR THE GIVEN h.
    si := Sum( [1..d] , j ->
			Size( Intersection( hOa[1][j] , O[1] ))*
			Size( Intersection(Oa[1][j] , ghO[2] )  ) );
    sd := Sum( [1..d] , j ->
			Size( Intersection( hOa[2][j] , O[1] ))*
			Size( Intersection(Oa[2][j] , ghO[1] )  ));
    if si<>sd then
      return true;
    fi;
od;
return false;
end;

#############################################################
VerifierCondition3Arb := function(m,n0,n)

local p,q,P; # q+1=2*p*d

q:=n0;
while q<=n do
  if IsPrimePowerInt(q) then
    P := Filtered(SSortedList(FactorsInt(q+1)),p->p>5);
    for p in P do
      if not VerifierCondition3(p,q,m) then
        Print("\n Possible Counterexample");
        return [p,q];
      fi;
    od;
  fi;
  q:=q+2;
od;
return true;
end;
	\end{verbatim}

\end{document}